\numberwithin{equation}{section}
\theoremstyle{remark}
\newtheorem{remark}{Remark}[section]
\newtheorem*{ack}{Acknowledgement}
\theoremstyle{plain}
\newtheorem{thm}{Theorem}[section]
\newtheorem{thmx}{Theorem}
\newtheorem{satz}{Satz}[section]
\newtheorem{lem}[satz]{Lemma} 
\theoremstyle{definition}
\newtheorem{defn}[satz]{Definition} 
\theoremstyle{remark}
\numberwithin{equation}{section}
\def\dist{\operatorname{dist}}
\newcommand{\sing}{\operatorname{sing}}
\newcommand{\B}{\operatorname{\mathcal{B}}}
\newcommand{\Log}{\operatorname{Log}}
\newcommand{\C}{\mathbb{C}}				
\newcommand{\R}{\mathbb{R}}				
\newcommand{\N}{\mathbb{N}}				
\newcommand{\Z}{\mathbb{Z}}				
\newcommand{\D}{\mathbb{D}}				
\newcommand{\J}{J}			
\newcommand{\F}{F}			
\newcommand{\ul}{\underline}					
\newcommand{\abs}[1]{\left\lvert#1 \right\rvert}
\newcommand{\gdw}{  \relax  \ifmmode \Longleftrightarrow  \else    $\Longleftrightarrow$  \fi}
\def\Im{\operatorname{Im}}
\def\Re{\operatorname{Re}}
\begin{document}
\title{Nowhere differentiable hairs for entire maps}
\author{Patrick Comd{\"u}hr}

\address{Mathematisches Seminar, Christian-Albrechts-Universit\"at zu Kiel, Ludewig-Meyn-Str. 4, D-24098 Kiel, Germany.}
\email{comduehr@math.uni-kiel.de}
\keywords{Exponential map, Eremenko-Lyubich class, complex dynamics, hair, external ray, differentiability}

\subjclass[2010]{30D05 (primary), 37F10, 30C65 (secondary)}

\begin{abstract}
In 1984 Devaney and Krych showed that for the exponential family $\lambda e^z$, where $0<\lambda <1/e$, the Julia set consists of uncountably many pairwise disjoint simple curves tending to $\infty$, which they called hairs. Viana proved that these hairs are smooth. Bara\'nski as well as Rottenfu{\ss}er, R\"uckert, Rempe and Schleicher gave analogues of the result of Devaney and Krych for more general classes of functions. In contrast to Viana's result we construct in this article an entire function, where the Julia set consists of hairs, which are nowhere differentiable.
\end{abstract}

\maketitle
\section{Introduction and main result} \label{Intro}
For the exponential family $f(z)=\lambda e^z$, where $0<\lambda<1/e$, it is not difficult to show that there exists a unique attracting fixed point $\xi \in \R$.
Devaney and Krych \cite{DK84} studied for these functions the basin of attraction of $\xi$, i.e.\  $A(\xi):=\{z : \lim_{k\to\infty} f^k(z)=\xi \}$. 
We say that a subset $H$ of $\C$ is a \textit{hair}, if there exists a homeomorphism $\gamma \colon [0,\infty) \to H$ such that $\lim_{t\to\infty} \gamma(t)=\infty$. We call $\gamma(0)$ the \textit{endpoint} of the hair.

We only state the part of the result due to Devaney and Krych which is relevant for us.
\begin{thmx} \label{ThmA}
	For $0<\lambda <1/e$ the set $\C\setminus A(\xi)$ is an uncountable union of pairwise disjoint hairs.
\end{thmx}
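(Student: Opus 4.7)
The plan is to construct the hairs by symbolic dynamics. First I would fix $R>0$ so large that $f(z)=\lambda e^z$ is strongly expanding on the right half-plane $H_R=\{\Re z>R\}$, and partition the preimage of $H_R$ into horizontal strips $S_k=\{z:(2k-1)\pi<\Im z\le(2k+1)\pi,\ \Re f(z)>R\}$, $k\in\Z$. On each $S_k$ the local inverse $L_k=(f|_{S_k})^{-1}$ is a single-valued branch of $w\mapsto\log(w/\lambda)$ and is strongly contracting: for $w,w'\in H_R$ with $\Re w,\Re w'$ large, $|L_k(w)-L_k(w')|$ is of order $e^{-\min(\Re w,\Re w')}|w-w'|$. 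Call a sequence $\underline s=(s_0,s_1,\dots)\in\Z^{\N}$ \emph{admissible} if its entries grow slowly enough, say $\log^+|s_n|=O(F^n(R))$ with $F(x)=\lambda e^x$, that the nested compositions $L_{s_0}\circ L_{s_1}\circ\cdots\circ L_{s_n}$ send points of comparable height in $H_R$ back into $H_R$.

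Second, for each admissible $\underline s$ I would build the \emph{tail} $g_{\underline s}$ as follows. For each reference height $t$ above some threshold $t_0$, pick points $w_n\in S_{s_n}$ whose real parts track the escape speed $F^n(t)$, and define $\gamma_{\underline s}(t)=\lim_{n\to\infty}L_{s_0}\circ L_{s_1}\circ\cdots\circ L_{s_{n-1}}(w_n)$. The contraction estimate guarantees that the limit exists and is independent of the exact choice of $w_n$. Continuity and monotonicity in $t$ then yield a homeomorphism $\gamma_{\underline s}\colon[t_0,\infty)\to g_{\underline s}$ with $\gamma_{\underline s}(t)\to\infty$ as $t\to\infty$. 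Uncountability of $\{g_{\underline s}\}$ is immediate, and pairwise disjointness follows from injectivity of each $L_k$ on $S_k$: two distinct admissible sequences eventually differ in some entry, after which their images under further inverse branches lie in disjoint strips.

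Third, I would identify $\bigcup_{\underline s} g_{\underline s}$ with the escaping set of $f$ inside $\C\setminus A(\xi)$. Any point $z$ with $f^k(z)\to\infty$ must eventually land in $H_R$ and have a well-defined strip itinerary, placing $z$ on the tail for that $\underline s$. Points in $\C\setminus A(\xi)$ whose forward orbits remain bounded account for the endpoints: since the only singular value $0$ of $f$ is attracted to $\xi$, the map is hyperbolic, so each admissible $\underline s$ has a unique bounded orbit realising the itinerary, and this orbit's starting point furnishes $\gamma_{\underline s}(t_0)$. Adjoining it makes each $g_{\underline s}$ into a hair homeomorphic to $[0,\infty)$. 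Finally $\C\setminus A(\xi)=J(f)$ holds by classical Fatou--Julia theory: since $0\in A(\xi)$ is the only singular value, every Fatou component of $f$ must meet $A(\xi)$ and hence be contained in it.

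The main obstacle is the second step: proving that the fibre of points with itinerary $\underline s$ is genuinely a curve, rather than a nontrivial continuum or a Cantor-type object. This rests on the quantitative contraction of the $L_k$ deep inside $H_R$, made precise via Koebe distortion in logarithmic coordinates or by direct estimates, combined with the right admissibility condition ruling out itineraries whose $|s_n|$ grow faster than the escape speed along the tail allows.
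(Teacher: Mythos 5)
Note first that the paper does not prove Theorem~\ref{ThmA} at all; it is quoted from Devaney--Krych \cite{DK84}, so your proposal must be measured against the standard proof. Your overall strategy (strip itineraries, contracting inverse branches, pullback limits) is the right one, but two steps fail as written, the decisive one being your third step. You define the strips by the condition $\Re f(z)>R$ with $R$ large, so a point only acquires an itinerary if its entire orbit stays deep in the right half-plane; but $\C\setminus A(\xi)$ contains many points violating this, the repelling fixed point $q$ (the second solution of $\lambda e^{x}=x$) being the simplest. The correct identification is $\C\setminus A(\xi)=\{z:\Re f^{n}(z)\ge q\text{ for all }n\}$, because $f$ maps $\{\Re z<q\}$ into $D(0,q)\subset A(\xi)$; one must therefore run the symbolic dynamics at the threshold $\Re z=q$, where the expansion factor is only $\lambda e^{q}=q>1$, not the strong expansion you assume. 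Relatedly, your dichotomy ``escaping points lie on tails, bounded orbits give endpoints'' is false: endpoints of hairs may themselves escape (for fast itineraries there is no bounded orbit realising $\underline{s}$ at all), and non-escaping endpoints may have unbounded, oscillating orbits. So ``the unique bounded orbit realising the itinerary'' neither exists in general nor characterises the endpoint; the endpoint must instead be produced as the limit of $\gamma_{\underline{s}}(t)$ as $t\downarrow t_{0}$, and proving that this limit exists and attaches continuously to the tail is a genuine part of the work, not a corollary of hyperbolicity.

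Second, the step you yourself flag as the main obstacle --- that the fibre of an itinerary is a single arc rather than some other continuum --- is precisely the content of the theorem, and uniform contraction of the $L_{k}$ alone does not yield it: contraction makes the fibre thin transversally but says nothing about its structure in the escaping direction. Devaney and Krych obtain the arc by a nested inverse-limit argument; the modern route (cf.\ Theorem~\ref{Hairs}, quoted from \cite{RRRS11}, and \cite{SZ03}) linearly orders each fibre by speed of escape via a head-start condition. Two smaller quantitative points: $L_{k}'(w)=1/w$, so the contraction factor is of order $1/\min(|w|,|w'|)$, not $e^{-\min(\Re w,\Re w')}$ --- harmless for convergence, since $1/R$ suffices, but it feeds into your admissibility condition, which should read $|2\pi s_{n}|\le F^{n}(t)$ for some $t$ (exponential boundedness) rather than the far more permissive $\log^{+}|s_{n}|=O(F^{n}(R))$; with the latter the pullbacks need not stay in the strips.
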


Viana \cite{Vi88} investigated the differentiability of the hairs considered by Devaney and Krych. 

\begin{thmx} \label{ThmD}
	For all $0<\lambda <1/e$ the hairs of $\lambda e^z$ are $C^\infty$-smooth.
\end{thmx}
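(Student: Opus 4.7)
The plan is to construct each hair by an explicit parameterization obtained from the inverse branches of $f_\lambda$ and then differentiate the functional equation that parameterization satisfies. Fix an external address $\underline{s}=s_0 s_1 s_2 \ldots$ with integer entries corresponding to a hair $H$, and let $L_k(z)=\log(z/\lambda)+2\pi i k$ denote the branch of $f_\lambda^{-1}$ whose image lies in the horizontal strip $\{(2k-1)\pi<\Im z\leq(2k+1)\pi\}$. For $t$ in an appropriate range one sets $\gamma(t)=\lim_{n\to\infty} L_{s_0}\circ\cdots\circ L_{s_n}(t)$; this limit exists by standard contraction estimates for the $L_k$ on suitable right half-planes, the resulting curve parameterizes $H$, and it satisfies a functional equation $f_\lambda(\gamma(t))=\gamma(T(t))$ for an explicit shift map $T$ on the parameter.

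To show $\gamma$ is $C^1$, I would differentiate this functional equation. Using $f_\lambda'=f_\lambda$ and the chain rule yields $\gamma'(t)=\gamma'(T(t))\,T'(t)/f_\lambda(\gamma(t))$; iterating $n$ times gives
\[
\gamma'(t)=\gamma'(T^n(t))\cdot\frac{\prod_{k=0}^{n-1}T'(T^k(t))}{\prod_{k=1}^{n}f_\lambda^k(\gamma(t))}.
\]
Points on hairs of $\lambda e^z$ (lying in $\C\setminus A(\xi)$) escape doubly exponentially to $\infty$, so the denominator $\prod f_\lambda^k(\gamma(t))$ grows vastly faster than the numerator. Hence the right-hand side converges as $n\to\infty$, and a standard argument (showing that the quotient stabilizes uniformly while $\gamma'(T^n(t))$ stays bounded in a suitable sense) identifies the limit with the actual derivative of $\gamma$.

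For general $C^k$ smoothness I would proceed by induction on $k$. Differentiating the functional equation $k$ times and applying Fa\`a di Bruno's formula produces an identity for $\gamma^{(k)}(t)$ in terms of $\gamma^{(j)}(T^n(t))$ for $j\leq k$, of derivatives of the iterates $T^n$, and of the iterates $f_\lambda^j(\gamma(t))$. The same doubly exponential dominance of the $f_\lambda^j(\gamma(t))$ makes each such expression converge absolutely and uniformly on compact subsets of $(0,\infty)$, so $\gamma^{(k)}$ exists and is continuous.

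The main obstacle is obtaining the required uniform estimates: one must control $|\gamma^{(j)}(T^n(t))|$ and the derivatives of $T^n$ well enough to exchange limit and differentiation, and in particular to conclude that the formal candidates actually are the derivatives of $\gamma$ rather than merely limits of difference quotients of auxiliary objects. This reduces to a careful analysis of the hyperbolic expansion along the forward orbit of $\gamma(t)$, combined with the very rapid (doubly exponential) growth of $\Re f_\lambda^n(\gamma(t))$; this feature of hair points, much stronger than mere escape to $\infty$, is precisely what makes every summation in the induction absolutely convergent and locally uniform.
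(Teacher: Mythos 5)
First, note that the paper does not prove this statement: Theorem \ref{ThmD} is quoted from Viana \cite{Vi88}, so the only meaningful comparison is with Viana's original argument. Your overall strategy --- realize the hair as a limit of compositions of inverse branches, use the functional equation relating a hair to its image hair, and control derivatives through the expansion along escaping orbits --- is indeed the strategy of \cite{Vi88}.

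As written, however, the argument has a genuine gap and relies on a heuristic that is false. The gap: you ``differentiate the functional equation'' satisfied by $\gamma$, but this presupposes exactly the differentiability you are trying to establish, so the resulting identity for $\gamma'(t)$ carries no information until $\gamma'$ is known to exist; the same circularity recurs at every stage of your induction on $k$. The standard repair (and Viana's route) is to prove $C^k$-convergence of the explicit approximants $\gamma_n=L_{s_0}\circ\dots\circ L_{s_{n-1}}\circ\beta\circ T^n$ for a smooth base curve $\beta$, and you never set these up --- the ``main obstacle'' you defer is the entire content of the theorem. The false heuristic: it is not true that the denominator $\prod_{k=1}^{n}f_\lambda^k(\gamma(t))=(f_\lambda^n)'(\gamma(t))$ ``grows vastly faster'' than the numerator $(T^n)'(t)$. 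For the functional equation to make sense, $T$ must be chosen so that $T^n(t)$ tracks $\Re f_\lambda^n(\gamma(t))$ (e.g.\ $T(t)=e^t-1$), and then $T'(T^k(t))=e^{T^k(t)}$ is comparable to $\bigl|f_\lambda^{k+1}(\gamma(t))\bigr|$: the two products grow at the same doubly exponential rate. Were the quotient really to tend to $0$ with $\gamma'(T^n(t))$ bounded, your identity would force $\gamma'\equiv 0$. What one actually proves is a cancellation estimate --- summability of $\bigl|\log\bigl(T'(T^k(t))/f_\lambda^{k+1}(\gamma(t))\bigr)\bigr|$ along the orbit, combined with the fact that the shifted hairs straighten out so that their derivatives at $T^n(t)$ tend to $1$ --- together with the analogous telescoping estimates for higher derivatives. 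Finally, the functional equation relates $\gamma_{\underline{s}}$ to the hair with shifted address $\sigma\underline{s}$, not to itself, so all estimates must be made uniform over the shift orbit of addresses.
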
	

The existence of hairs is not restricted to the situation considered by Devaney and Krych. Hairs also appear in the so-called \textit{Eremenko-Lyubich class} $\mathcal{B}$ which has become a major topic of research in the last two decades. It is the class of all transcendental entire functions whose set of critical and asymptotic values, denoted by $\sing(f^{-1})$, is bounded. 

The next result is a generalization of Theorem \ref{ThmA}. We only state the part of the result by Bara\'nski \cite{Bar07} and by Rottenfu{\ss}er, R\"uckert, Rempe and Schleicher \cite{RRRS11}, which is relevant for us.
\begin{thmx}\label{RRRS}
	Let $f\in\mathcal{B}$ be of finite order. Then for $\lambda \in \R$ chosen small enough, the map $\lambda f$ has a unique attracting fixed point $\xi\in \C$ and $\C\setminus A(\xi)$ is an uncountable union of pairwise disjoint hairs.
\end{thmx}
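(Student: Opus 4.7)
The plan is to pass to logarithmic coordinates and construct the hairs via symbolic dynamics on tracts. Since $f \in \mathcal{B}$, the singular set is contained in some disk $\{|z| \le R\}$, and the preimage $f^{-1}(\{|z|>R\})$ decomposes into countably many pairwise disjoint unbounded Jordan domains $T_j$, called \emph{tracts}, each mapped by $f$ as a universal covering onto $\{|z|>R\}$. Lifting under $\exp$, one obtains the logarithmic transform $F = \log\circ f\circ\exp$, defined on the $2\pi i\Z$-translates of the tracts; each of its components is mapped conformally by $F$ onto the right half-plane $H = \{\Re w > \log R\}$. The finite-order hypothesis on $f$ translates into a polynomial bound on $|F'|$ in terms of $\Re F$ and yields the key expansion estimate $|F'(w)| \ge c\,\Re F(w)$ on the components, up to bounded distortion coming from Koebe.

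I would then establish, for $|\lambda|$ small, the existence of a unique attracting fixed point of $\lambda f$. Since $f$ is bounded on $\{|z|\le R\}$, for $|\lambda|$ sufficiently small the map $\lambda f$ contracts a small disk around $0$ into itself, yielding by Banach's theorem a unique fixed point $\xi$ of multiplier strictly less than $1$; after shrinking $\lambda$ further one arranges $\sing((\lambda f)^{-1}) \subset A(\xi)$, so that $A(\xi)$ absorbs every Fatou component that could be produced by the singular orbit, and moreover that everything outside the (rescaled) tracts is eventually mapped into $A(\xi)$.

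The core of the argument is the construction of the hairs. For each admissible sequence $\underline{s} = (T_{j_0}, T_{j_1}, \ldots)$ of tracts of $\lambda f$, I would consider inverse branches of the corresponding logarithmic transform along $\underline{s}$. Using the expansion estimate above together with Koebe distortion, the nested preimages of any horizontal ray of sufficient height shrink exponentially in diameter. A contraction-mapping argument then produces, for each such address, a canonical injective curve $\gamma_{\underline{s}} \colon [0,\infty) \to \C$ consisting precisely of points whose orbit follows $\underline{s}$, parameterized so that $\gamma_{\underline{s}}(t)\to\infty$ as $t\to\infty$. Different admissible addresses produce disjoint curves, because two orbits with distinct itineraries separate in finite time, and since there are uncountably many admissible addresses one obtains uncountably many hairs.

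Finally, to identify $\C\setminus A(\xi)$ with $\bigcup_{\underline{s}}\gamma_{\underline{s}}([0,\infty))$, I would argue that a point not attracted to $\xi$ must, for $\lambda$ small, eventually enter a tract and then remain in tracts forever; this assigns it an external address $\underline{s}$, and the uniqueness of the curve carrying that itinerary places the point on $\gamma_{\underline{s}}$. I expect the main technical obstacle to be the contraction/distortion estimate driving the symbolic construction: without finite order one loses the polynomial control on $|F'|$, and finite order is exactly what allows the uncountable family of admissible itineraries to be promoted into genuine continuous curves with a controlled parameterization, rather than only a Cantor set of escaping points.
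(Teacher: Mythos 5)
First, note that the paper does not prove Theorem \ref{RRRS} at all: it is quoted as a known result of Bara\'nski \cite{Bar07} and of Rottenfu{\ss}er, R\"uckert, Rempe and Schleicher \cite{RRRS11}, so your proposal has to be measured against those papers rather than against anything in this one. Your overall architecture (logarithmic change of variable, disjoint type for small $\lambda$, symbolic dynamics on tracts, inverse branches along an address, identification of $\C\setminus A(\xi)$ with the union of the address classes) is indeed the architecture of those proofs.

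There is, however, a genuine gap in your central step, and it sits exactly where the finite-order hypothesis has to do its work. The expansion estimate $\abs{F'(w)}\ge \frac{1}{4\pi}\left(\Re F(w)-\log R\right)$ follows from the Koebe $1/4$-theorem applied to the inverse branches and holds for \emph{every} logarithmic transform of a map in $\mathcal{B}$; it does not come from finite order. Consequently the mechanism you describe --- uniform expansion plus Koebe distortion makes nested preimages of a ray shrink exponentially, hence a contraction-mapping argument yields a curve for each address --- would apply verbatim to infinite-order disjoint-type maps, where the conclusion is false: the counterexample of \cite{RRRS11}, recalled in the introduction of this paper, is a disjoint-type function in $\mathcal{B}$ (hence uniformly expanding in logarithmic coordinates) whose Julia set contains no unbounded curves. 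Shrinking diameters give you, for each address, a well-defined closed set of points with that itinerary, but not that this set is an arc: the preimages of a reference ray can oscillate ever more wildly and fail to converge as parametrized curves. The actual role of finite order in \cite{RRRS11} is geometric --- it forces the tracts to have bounded slope and bounded wiggling, which yields a linear head-start condition; this totally orders the points of a given address by speed of escape, and only then can one conclude that each nonempty address class is a hair. (Bara\'nski's argument in \cite{Bar07} likewise exploits the geometry of the tracts of finite-order maps, not merely expansion.) So your sketch correctly senses that finite order is the crux, but locates its use in the wrong estimate, and the step ``uniform contraction $\Rightarrow$ curve'' as stated would fail.
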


Roughly speaking, an entire function is said to be of finite order, if it does not grow too fast (see Section \ref{Prelim} for a precise definition).
	
In \cite{Co17} it was shown that Viana's result generalizes to Zorich maps, which are a higher dimensional counterpart of the exponential family. 
 
The main aim of this article is to show that this is not the case for general functions in class $\mathcal{B}$. So even for analytic functions the hairs need not be smooth. More precisely we give an example of a function in class $\mathcal{B}$ of finite order with hairs which are nowhere differentiable. 

\begin{thm}\label{THM2}
	There exists a function $f\in \B$ as in Theorem \ref{RRRS}, where the set $\C\setminus A(\xi)$ consists of hairs which are nowhere differentiable.
\end{thm}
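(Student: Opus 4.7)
\bigskip

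\noindent\textbf{Proof proposal for Theorem \ref{THM2}.}

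The plan is to construct an explicit entire function $f\in\mathcal{B}$ of finite order whose hairs admit a series representation of Weierstrass type, and then to exploit a classical nowhere-differentiability criterion. Throughout I will work in logarithmic coordinates: let $F = L\circ f\circ\exp$, where $L$ is a suitable branch of $\log$, so that $F$ is defined on a union of tracts $T_j$, each mapped conformally onto a right half-plane $H_R = \{w:\Re w>R\}$. For $\lambda$ small, Theorem \ref{RRRS} realises each hair as the limit
\begin{equation*}
   \gamma_{\ul s}(t) \;=\; \lim_{n\to\infty} F^{-1}_{s_0}\circ F^{-1}_{s_1}\circ\cdots\circ F^{-1}_{s_{n-1}}(w_n(t)),
\end{equation*}
where $\ul s=(s_0,s_1,\dots)$ is an admissible external address and the $F^{-1}_{s_k}$ are the univalent branches into the tract $T_{s_k}$. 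The key idea is to design $f$ so that each $F^{-1}_j$ contributes a non-trivial bounded oscillation in the imaginary direction, with geometrically decreasing amplitudes but geometrically increasing frequencies, so that the limit acquires Weierstrass-type behavior.

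First I would construct $f$ by starting from a model function (e.g.\ $\cosh$ or a $\lambda e^z$-type map) and modifying it so that each tract $T_j$, while contained in a horizontal strip of bounded width (which keeps $f$ in class $\mathcal{B}$), has a ``wiggly'' central curve whose deviation from the horizontal axis is self-similar on dyadic scales. Concretely, I would prescribe on each tract boundary a conformal parametrization that produces, for the inverse branch $F^{-1}_j$ restricted to $H_R$, an expansion of the form
\begin{equation*}
   F^{-1}_j(w) \;=\; L(w) + 2\pi i j + i\,\eta_j(w),
\end{equation*}
where $L$ is the principal logarithm and $\eta_j$ is a bounded, smooth, non-constant periodic function of $\Im w$. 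The construction can be carried out either via a Bishop-type quasiconformal surgery or by directly prescribing the tracts and appealing to Rempe's uniformization; in either case one must check (via standard Riemann-surface and Phragm\'en--Lindel\"of arguments) that $f$ has finite order, which amounts to controlling the total area and shape of the tracts at large scale.

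Second, I would derive the hair formula by iterating the invariance $F(\gamma_{\ul s}(t))=\gamma_{\sigma\ul s}(\tilde t)$, where $\sigma$ denotes the shift on addresses. Separating real and imaginary parts and using the explicit form of $F^{-1}_j$ above, one obtains
\begin{equation*}
   \Im\gamma_{\ul s}(t) \;=\; \sum_{n=0}^{\infty} a^n\,\eta_{s_n}\!\bigl(b^n\,\varphi_n(t)\bigr),
\end{equation*}
for some $0<a<1$ and $b>1$ controlled by the tract geometry, where $\varphi_n$ is bi-Lipschitz. This uses contraction of $F^{-1}$ on $H_R$ (guaranteed by the expansion estimates of \cite{RRRS11}, \cite{Bar07}) to give summability and geometric decay of the $n$-th term. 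With the tract data chosen so that $ab>1$ and $\eta_j$ has a non-trivial Fourier coefficient at the design frequency, the resulting sum is a Weierstrass-type series in $t$, hence nowhere differentiable by the classical Hardy criterion; picking uncountably many admissible addresses yields the conclusion for $\C\setminus A(\xi)$.

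The main obstacle is the compatibility of three requirements on $f$: membership in $\mathcal{B}$ (bounded singular set), finite order, and tract boundaries oscillatory enough to force $ab>1$. In a naive construction, making the tract boundaries more oscillatory tends to inflate the order of growth of $f$, and one must carefully balance the amplitude/frequency schedule with the width of the tracts to keep the order finite. Controlling this trade-off — most likely through quantitative Ahlfors--Phragm\'en--Lindel\"of estimates on each tract, combined with a geometric series bound on the total perturbation of the logarithmic derivative — is where the bulk of the technical work will lie.
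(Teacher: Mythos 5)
Your first half --- building a finite-order function in $\mathcal{B}$ whose tract is a bounded-width strip with a ``wiggly'' core so that the oscillation survives into logarithmic coordinates --- is the right starting point and matches the paper's construction (the paper realizes the tract as $h^{-1}(S_{0,\pi})$ with $h(z)=5\pi z+2\pi i\sin z$ and produces $f$ by the P\'olya--Szeg\"o Cauchy integral method rather than by surgery or uniformization; either route is viable). The gap is in the second half. The ansatz $F_j^{-1}(w)=L(w)+2\pi ij+i\,\eta_j(w)$ with $\eta_j$ a function of $\Im w$ alone, and the resulting telescoping of $\Im\gamma_{\ul s}$ into a lacunary series $\sum a^n\eta_{s_n}(b^n\varphi_n(t))$, cannot be arranged: the inverse branches are conformal maps onto the tracts, they mix real and imaginary parts, and their form is dictated by the uniformization, not freely prescribable. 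Moreover the amplitude decay $a$ and frequency growth $b$ are not independent design parameters --- both are governed by $\abs{\phi_n'}$ up to Koebe distortion, so $ab$ is pinned near $1$; the ``main obstacle'' you identify (forcing $ab>1$ while keeping the order finite) is therefore not a tuning problem but an impossibility inside your framework. Finally, the classical Hardy criterion applies to genuine lacunary trigonometric series, not to sums $\sum a^n\eta_{s_n}(b^n\varphi_n(t))$ with varying $\eta_{s_n}$ and varying bi-Lipschitz time changes $\varphi_n$, where cancellations are not excluded; and nowhere-differentiability of $t\mapsto\Im\gamma(t)$ for one parametrization would still not directly yield the required statement that \emph{no} parametrization of the hair has a non-zero derivative anywhere.

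The paper avoids all of this with a purely geometric argument which you could substitute for your second step. Fix $w_0$ on the hair and set $w_n=F^n(w_0)$. Because the tract contains no rightward ray (its cross-sections at $\Re z=\pi/2+2\pi k$ lie below $-1/5$ and those at $\Re z=-\pi/2+2\pi k$ lie above $1/5$), the image hair through $w_n$ must oscillate in imaginary part, so one finds two points $z_{n,k_n},z_{n,k_n+1}$ on it within bounded distance of $w_n$ such that the triangle $\Delta(w_n,z_{n,k_n},z_{n,k_n+1})$ is uniformly non-degenerate. The pullback $\phi_n=F_0^{-1}\circ\cdots\circ F_{n-1}^{-1}$ has distortion close to $1$ on $D(w_n,4\pi)$ by Koebe's theorem and maps a slightly smaller disc onto a convex set, so by Lemma \ref{Kleine_Verzerrung} the pulled-back triangles with vertex $w_0$ remain uniformly non-degenerate while shrinking to $w_0$, since $\abs{F'}\geq K>1$. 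A curve that is differentiable at $w_0$ with non-zero derivative admits only nearly degenerate small triangles with vertex at $w_0$ (Lemma \ref{gamma_Dreieck}), so the hair is not differentiable at $w_0$. This replaces the Weierstrass--Hardy machinery entirely: it is precisely conformality (bounded distortion) that transports the fixed-scale oscillation of the tract to every scale at $w_0$, which is the mechanism your series formulation was trying, unsuccessfully, to encode.
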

By saying that a hair $H\subset \C$ is nowhere differentiable, we mean that no parametrization $\gamma\colon [0,\infty) \to H$ has a non-zero derivative at any point of $[0,\infty)$.

To obtain this result we use the so-called Cauchy integral method to construct a transcendental entire map of finite order which grows in a prescribed tract to $\infty$. The existence of hairs for such a function is guaranteed by Theorem \ref{RRRS}. By choosing the geometry of the tract in a suitable way, one can obtain that the hairs cannot be differentiable at any point.

We conclude this introduction with a number of remarks.

\begin{remark}
	In complex dynamics the set $\C\setminus A(\xi)$ considered in Theorem \ref{ThmA} and \ref{RRRS} is called the \textit{Julia set} of $f$, denoted by $J(f)$. It is the set of all points in $\C$ where the iterates $f^k$ of $f$ do not form a normal family in the sense of Montel. Moreover, it turns out that the hairs except for their endpoints lie in the set $I(f):=\{z : \lim_{k\to\infty} f^k(z)=\infty\}$, which we call the \textit{escaping set} of $f$.
	
	For further information on complex dynamics we refer to \cite{Bea91,Ber93,Mi06,St93}.
\end{remark}

\begin{remark} 
		The conclusion of the results stated in Theorem \ref{ThmD} and \ref{RRRS} remains true in a more general setting. Viana \cite{Vi88} showed that the hairs of $\lambda e^z$ are smooth for all $\lambda \in \C\setminus\{0\}$. In \cite{RRRS11} it was shown that hairs exists for all functions in class $\mathcal{B}$ of finite order. In the same paper it was shown that the assumption on the order of $f$ cannot be dropped. In fact, they constructed a function of bounded type of infinite order such that every path-connected component of $J(f)$ is bounded and thus this function has no hairs. This answered a question of Eremenko to the negative whether for every $z\in I(f)$ there exists an unbounded and connected set $A\subset \C$ with $z\in A$ such that $f^n|_A \to \infty$ uniformly.
\end{remark}

\begin{remark}
	Beginning with Weierstra{\ss} in 1895 \cite{We95}, it is well-known that there exist continuous functions $f\colon \R\to\R$ which are nowhere differentiable. Fatou \cite{Fat06} has already shown that for $f(z)=z^2+c$, the Julia set $\J(f)$ is a Jordan curve for sufficiently small $\abs{c}\in\R$, but it is nowhere differentiable.
	Theorem \ref{THM2} shows that the pathological example of Weierstra{\ss} \cite{We95} has a dynamical counterpart in terms of hairs of entire functions.
\end{remark}

\begin{ack}
	I would like to thank Walter Bergweiler, Lasse Rempe-Gillen and Dan Nicks for valuable suggestions.
\end{ack}

	\section{Preliminaries}\label{Prelim}
	In this section we collect geometrical properties of curves and the main tools from complex dynamics. 
	  \begin{defn}[Order of a function] \label{finite}
	  	We say that an entire function $f$ is of \textit{finite order} if there exist $\mu,R>0$ such that 
	  	\begin{equation}
	  	\begin{aligned}
	  	\abs{f(z)}\leq \exp\left(\abs{z}^\mu\right) \quad \text{ for } \abs{z}\geq R.
	  	\end{aligned}
	  	\end{equation}
	  	The infimum over all $\mu$ such that this holds for some $R>0$ is called \textit{the order} of $f$ and is denoted by $\rho(f)$, where we use the convention $\inf \emptyset = \infty$, if such an $R$ does not exist for any $\mu >0$. Then we say that $f$ is of \textit{infinite order}.
	  	\end{defn}
	  		The following result \cite[Chapter 5, Theorem 1.2]{GO08} gives a connection between the number of asymptotic values and the order of an entire function.
	  			\begin{lem}[Denjoy-Carleman-Ahlfors] \label{DCA thm}
	  				Let $f$ be entire. Then the number of asymptotic values of $f$ is at most $\max\{1,2\rho(f)\}$.
	  				\end{lem}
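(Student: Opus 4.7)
This is the classical Denjoy--Carleman--Ahlfors theorem, and my plan is to follow Ahlfors' length-area method. Assume $f$ has $q \geq 2$ distinct asymptotic values $a_1, \ldots, a_q$, since the case $q \leq 1$ is immediate. I may assume each $a_j$ is finite (an asymptotic value at $\infty$ is treated in the same spirit after using $\{|w| > M\}$ in place of a disk about $a_j$). For each $j$, fix an asymptotic curve $\gamma_j \colon [0,\infty) \to \mathbb{C}$ with $\gamma_j(t) \to \infty$ and $f(\gamma_j(t)) \to a_j$, and choose $\varepsilon > 0$ small enough that the disks $\overline{D(a_j, \varepsilon)}$ are pairwise disjoint. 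Define the \emph{tract} $\Omega_j$ to be the unbounded connected component of $\{z : |f(z) - a_j| < \varepsilon\}$ containing an unbounded tail of $\gamma_j$. The $\Omega_j$ are pairwise disjoint, open and unbounded; writing $\theta_j(r)$ for the angular measure of $\Omega_j \cap \{|z| = r\}$, disjointness gives $\sum_{j=1}^{q} \theta_j(r) \leq 2\pi$ for all large $r$.

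The heart of the argument is Ahlfors' length-area (distortion) inequality. Applied in each $\Omega_j$ to the non-negative function $u_j := \log(\varepsilon/|f - a_j|)$, which vanishes on $\partial \Omega_j$ and tends to $+\infty$ along $\gamma_j$, it provides a lower bound of the form
\[
\max_{z \in \Omega_j,\ |z| = r} u_j(z) \;\geq\; C \exp\!\left(\pi \int_{r_0}^{r} \frac{ds}{s\, \theta_j(s)}\right)
\]
for some $C > 0$ and all sufficiently large $r$. By the Cauchy--Schwarz inequality,
\[
\sum_{j=1}^{q} \frac{1}{\theta_j(r)} \;\geq\; \frac{q^2}{\sum_j \theta_j(r)} \;\geq\; \frac{q^2}{2\pi},
\]
so that, on average over $j$, the integral in the exponent grows at rate at least $(q/2)\log r$. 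Combining the $q$ Ahlfors estimates then forces $\log M(r, f) \geq c\, r^{q/2}$ for some $c > 0$ and all large $r$, whence $\rho(f) \geq q/2$. Therefore $q \leq 2 \rho(f) \leq \max\{1, 2\rho(f)\}$, as desired.

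The main technical obstacle is the transition to the conclusion $\log M(r, f) \geq c\, r^{q/2}$: the function $u_j$ is large because $|f - a_j|$ is \emph{small} inside $\Omega_j$, not because $|f|$ is large, so a lower bound on $u_j$ does not directly bound $\log M(r, f)$. The standard resolution is a Phragm\'en--Lindel\"of argument in the complementary region $\mathbb{C} \setminus \bigcup_j \Omega_j$: $|f|$ is uniformly bounded inside each tract, while the tracts together occupy bounded total angular measure, so a comparison between the Ahlfors estimate inside the tracts and subharmonic growth estimates in the complementary sectors pins down the order of $f$ from below. This potential-theoretic bookkeeping is the delicate part of Ahlfors' original argument; after it is carried out, the rest is just the Cauchy--Schwarz combinatorics above.
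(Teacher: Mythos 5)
The paper offers no proof of this lemma: it is quoted directly from Goldberg--Ostrovskii \cite[Ch.~5, Thm.~1.2]{GO08}, so there is no internal argument to compare against. Judged as a proof of the classical Denjoy--Carleman--Ahlfors theorem, your sketch assembles the right toolbox (a length--area estimate plus Cauchy--Schwarz over disjoint unbounded domains of total angular measure at most $2\pi$), but it applies that toolbox in the wrong place, and the difficulty you flag in your last paragraph is not ``bookkeeping'' --- it is the entire proof. Two concrete problems. First, $u_j=\log(\varepsilon/|f-a_j|)$ is \emph{superharmonic} on $\Omega_j$ (it equals $+\infty$ at every zero of $f-a_j$, and such zeros may accumulate in $\Omega_j$ when $a_j$ is an indirect singularity); the Ahlfors--Carleman--Tsuji estimate you invoke is a quantitative Phragm\'en--Lindel\"of principle and requires \emph{subharmonicity}, so the displayed lower bound on $\max u_j$ is not justified. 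Second, even if it were, a lower bound on $u_j$ only says that $|f-a_j|$ is extremely small somewhere on $|z|=r$; that yields no lower bound on $\log M(r,f)$ and hence none on $\rho(f)$. You acknowledge this but do not repair it.

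The classical argument works in the \emph{complementary} domains from the outset. Choose one asymptotic path $\Gamma_j$ per value; outside a large disk these paths cut the plane into $q$ unbounded domains $G_1,\dots,G_q$, each bounded by two paths along which $f$ tends to \emph{distinct} limits (this uses that one path is chosen per value). Lindel\"of's theorem --- a step missing from your sketch --- shows that $f$ is unbounded in each $G_j$: a bounded analytic function tending to limits along the two boundary arcs meeting at $\infty$ would force those limits to coincide. Then $v_j:=\log^+\abs{f/M}$ is genuinely subharmonic, nonnegative, vanishes near $\partial G_j$, and is unbounded in $G_j$, so the length--area inequality applies to $v_j$ in $G_j$; combined with your Cauchy--Schwarz step for the angular measures of the $G_j$, this gives $\log M(r,f)\geq c\,r^{q/2}$ along a sequence $r\to\infty$, hence $\rho(f)\geq q/2$. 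In short, the Cauchy--Schwarz combinatorics is correct, but the estimate must be run in the domains where $f$ is large, not in the tracts where it is small.
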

	  				
	  				The next result can be found in \cite[p.\ 46]{Du04}.
	  				\begin{lem}\label{Radius of convexity}
	  					Let $f\colon \D \to \C$ be univalent. Then $f(D(0,r))$ is a convex set for $r\leq \sqrt{2}-1$. 	
	  					\end{lem}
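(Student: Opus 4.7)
My plan is to reduce to the normalized class $S$ via affine invariance and then apply the classical infinitesimal criterion for convexity of the image of a subdisk.

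First I would replace $f$ by $g(z):=(f(z)-f(0))/f'(0)$, which lies in the class $S$ of univalent functions on $\D$ normalized by $g(0)=0$, $g'(0)=1$. Since convexity of a plane set is invariant under affine maps, the set $f(D(0,r))$ is convex iff $g(D(0,r))$ is, so without loss of generality $f\in S$.

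Next I would invoke the tangent-turning criterion: for each $\rho<1$ the boundary curve $\theta\mapsto f(\rho e^{i\theta})$ of the Jordan domain $f(D(0,\rho))$ is convex iff its tangent argument is nondecreasing, i.e.\ iff
\begin{equation}
\Re\!\left(1+\frac{zf''(z)}{f'(z)}\right)\ge 0\qquad\text{for all }|z|=\rho.
\end{equation}
To verify this inequality I would apply the classical pre-Schwarzian bound for $f\in S$,
\begin{equation}
\left|\frac{zf''(z)}{f'(z)}-\frac{2|z|^2}{1-|z|^2}\right|\le\frac{4|z|}{1-|z|^2},
\end{equation}
a consequence of the Grunsky/area inequalities (cf.\ \cite{Du04}). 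Taking real parts in this bound gives a lower estimate for $\Re(1+zf''/f')$ whose nonnegativity reduces to a quadratic inequality in $\rho$, which I would verify directly on the relevant range.

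The main obstacle is matching the precise constant $\sqrt 2-1$. The route above naturally produces the sharp Gr\"onwall--Study radius of convexity $2-\sqrt 3\approx 0.268$; the Koebe function $k(z)=z/(1-z)^2$ already witnesses non-convexity of $k(D(0,\rho))$ at radii slightly exceeding $2-\sqrt 3$, so the strictly larger value $\sqrt 2-1\approx 0.414$ asserted in the lemma cannot follow from the standard pre-Schwarzian argument. I would therefore defer to the explicit computation at Duren \cite[p.~46]{Du04} cited by the paper for whatever refinement produces this exact constant; alternatively, the stated bound may be a misprint for $2-\sqrt 3$, in which case the argument sketched above is complete, and the smaller constant is still more than enough for the later application of the lemma to the Cauchy integral construction.
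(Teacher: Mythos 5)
You are right, and you have in fact caught a genuine error in the paper. For holomorphic univalent $f\colon\D\to\C$ the sharp radius of convexity is $2-\sqrt{3}\approx 0.268$, not $\sqrt{2}-1\approx 0.414$: the Koebe function $k(z)=z/(1-z)^2$ gives an explicit counterexample, since
\begin{equation}
\Re\left(1+\frac{zk''(z)}{k'(z)}\right)=\Re\,\frac{1+4z+z^2}{1-z^2}
\end{equation}
is negative at $z=-r$ for $2-\sqrt{3}<r<1$ (e.g.\ $r=0.3$), so $k(D(0,r))$ is already non-convex at radii well below $\sqrt{2}-1$. Your reduction to the class $S$, the tangent-turning criterion, and the pre-Schwarzian estimate together constitute the classical proof of the correct statement (the pre-Schwarzian bound follows from $|a_2|\le 2$ applied to the Koebe transform, which is a touch more elementary than invoking the Grunsky inequalities); carrying out the real-part computation yields the quadratic $r^2-4r+1\ge 0$, hence $r\le 2-\sqrt{3}$, exactly as you anticipated.

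The paper offers no proof of this lemma, only the citation \cite[p.~46]{Du04}, so there is no internal argument to compare yours against; the constant $\sqrt{2}-1$ is almost certainly a transcription slip for $2-\sqrt{3}$, or a misreading of a nearby result in Duren. As you observe, this does not damage the proof of Theorem~\ref{THM2}: the lemma is invoked only to make $\phi_n(D_n)$ convex by taking $R$ large, and any fixed positive radius of convexity suffices because $4\pi/R\to 0$. (The text's condition ``$2\pi/R<\sqrt{2}-1$'' also appears mis-stated, since $D_n$ has radius $4\pi$ rather than $2\pi$; the corrected requirement ``$4\pi/R<2-\sqrt{3}$'' is still met for $R$ large enough, so the argument goes through unchanged.)
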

	  		\begin{defn}[Distortion]
				Let $U\subset \C$ be open and $f\colon U \to \C$ univalent. If $D\subset U$ is bounded, then we call
	 			\begin{equation}
	  			\begin{aligned}
					\mathcal{L}(f|_D):= \dfrac{\sup \{\abs{f'(z)} : z\in D\}}{\inf \{\abs{f'(z)} : z\in D\}}
	  			\end{aligned}
	 			\end{equation}
	  			the \textit{distortion of f on D}.
	  		\end{defn}
	  		The following Lemma can be found in \cite[Theorem 1.3]{Po92}.
			\begin{lem}[Koebe's distortion theorem]
 				Let $r>0$, $z_0\in\C$ and let $f\colon D(z_0,r)\to \C$ be univalent. Then we have for all $z\in D(z_0,r)$ and $s:=\abs{z-z_0}$
						\begin{equation}
	  					\begin{aligned}
	  					\dfrac{r-s}{(r+s)^3}\abs{f'(z_0)} \leq \abs{f'(z)} \leq \dfrac{r+s}{(r-s)^3}\abs{f'(z_0)}.
	  					\end{aligned}
	  					\end{equation}
	  		\end{lem}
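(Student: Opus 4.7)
The plan is to reduce to the normalized Schlicht class $\s$ via a scaling, establish the classical Koebe derivative estimates on $\s$ using Bieberbach's coefficient bound $\abs{a_2}\leq 2$, and then undo the normalization. Given $f\colon D(z_0,r)\to\C$ univalent, setting
$$g(w):=\frac{f(z_0+rw)-f(z_0)}{r\,f'(z_0)},\qquad w\in\D,$$
produces $g\in\s$ (i.e., $g$ is univalent on $\D$ with $g(0)=0$ and $g'(0)=1$), and $g'(w)=f'(z_0+rw)/f'(z_0)$. Hence it is enough to prove the normalized Koebe estimate
$$\frac{1-t}{(1+t)^3}\leq\abs{g'(w)}\leq\frac{1+t}{(1-t)^3},\qquad t=\abs{w}<1,$$
for $g\in\s$, and then evaluate at $w=(z-z_0)/r$.

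Next I would establish Bieberbach's inequality $\abs{a_2}\leq 2$ for $g(w)=w+a_2w^2+\cdots\in\s$. The classical route uses the area theorem: for functions $h(w)=1/w+b_0+b_1w+\cdots$ univalent on $\{0<\abs{w}<1\}$, one has $\sum_{n\geq 1}n\abs{b_n}^2\leq 1$. Forming the odd square-root transform $\varphi(w)=\sqrt{g(w^2)}=w+(a_2/2)w^3+\cdots$ (single-valued since $g$ has a simple zero only at the origin) and passing to $\psi(w)=1/\varphi(1/w)=w-(a_2/2)w^{-1}+\cdots$, the area inequality forces $\abs{a_2/2}\leq 1$.

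The decisive step is then the Koebe transform. For $\zeta\in\D$ fixed, the function
$$G_\zeta(w):=\frac{g\!\left(\tfrac{w+\zeta}{1+\overline{\zeta}w}\right)-g(\zeta)}{(1-\abs{\zeta}^2)\,g'(\zeta)}$$
again lies in $\s$, and a short Taylor expansion at $w=0$ shows that its second coefficient equals $\tfrac{1}{2}(1-\abs{\zeta}^2)g''(\zeta)/g'(\zeta)-\overline{\zeta}$. Applying $\abs{a_2}\leq 2$ to $G_\zeta$ yields the pointwise inequality
$$\left|(1-\abs{\zeta}^2)\frac{g''(\zeta)}{g'(\zeta)}-2\overline{\zeta}\right|\leq 4\qquad(\zeta\in\D).$$

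Finally, I would integrate this bound along radii $\zeta=te^{i\theta}$. Multiplying by $e^{i\theta}$ and taking real parts, together with $\partial_t\log\abs{g'(te^{i\theta})}=\Re\bigl(e^{i\theta}g''(te^{i\theta})/g'(te^{i\theta})\bigr)$, yields
$$\frac{2t-4}{1-t^2}\leq \frac{\partial}{\partial t}\log\abs{g'(te^{i\theta})}\leq \frac{2t+4}{1-t^2},$$
and integrating from $0$ to $t$ with $\log\abs{g'(0)}=0$ delivers the logarithmic Koebe bounds, which exponentiate to the desired inequality for $g$ and, after undoing the scaling, for $f$. The main obstacle is Bieberbach's inequality itself: the square-root trick requires careful justification that $\sqrt{g(w^2)}$ admits a single-valued odd univalent branch on $\D$ so that the area theorem applies; everything else is essentially calculus once this is in place.
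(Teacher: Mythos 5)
The paper offers no proof of this lemma at all: it is quoted as a known result with the citation \cite[Theorem 1.3]{Po92}, so there is no in-paper argument to compare yours against. Your proposal is the standard textbook proof (area theorem, Bieberbach's bound $\abs{a_2}\leq 2$, the Koebe transform giving the pointwise inequality for $(1-\abs{\zeta}^2)g''(\zeta)/g'(\zeta)-2\overline{\zeta}$, then radial integration), and every step is correct; the computation of the second coefficient of $G_\zeta$ and the two radial integrals do produce $\log\bigl((1\pm t)/(1\mp t)^3\bigr)$ as claimed. The obstacle you flag, single-valuedness of the odd square-root transform, is routine: since $g$ is univalent with $g(0)=0$, the function $g(w^2)/w^2$ is holomorphic and zero-free on $\D$, hence has a holomorphic square root equal to $1$ at the origin, and $\varphi(w)=w\sqrt{g(w^2)/w^2}$ is odd and univalent because $\varphi(w_1)=\varphi(w_2)$ forces $w_1^2=w_2^2$, while oddness excludes $w_1=-w_2\neq 0$.

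One substantive discrepancy you should be aware of: undoing your normalization does not reproduce the inequality as printed. With $t=s/r$ the normalized bounds give
\begin{equation}
\begin{aligned}
\dfrac{r^2(r-s)}{(r+s)^3}\abs{f'(z_0)}\leq\abs{f'(z)}\leq\dfrac{r^2(r+s)}{(r-s)^3}\abs{f'(z_0)},
\end{aligned}
\end{equation}
so a factor $r^2$ is missing from the lemma as stated. The printed version is not scale-invariant and already fails for $f(z)=z$ on $D(0,2)$ at $z=z_0$, where it would assert $1\leq 1/4$. This is a transcription error in the paper (Pommerenke's theorem is stated for the unit disk) rather than a gap in your argument, and it is harmless for the paper's only use of the lemma in Remark \ref{Koebe_Verzerrung}, where the $r^2$ cancels in the quotient defining the distortion. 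But be clear that your proof, carried to the end, establishes the corrected statement, not the literal one.
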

	  		\begin{remark} \label{Koebe_Verzerrung}
	  			Under the assumptions of Koebe's distortion theorem we have
	  				\begin{equation}
	  				\begin{aligned}
	  				\mathcal{L}\left(f|_{D(z_0,s)}\right) \leq \left(\dfrac{r+s}{r-s}\right)^4.
	  				\end{aligned}
	  				\end{equation}
			\end{remark}
	  		In the following denote by $\Delta(z_1,z_2,z_3)\subset \C$ the triangle with vertices $z_1$, $z_2$ and $z_3$, where these points are distinct. Moreover, put $d_{j,k} := \abs{z_j-z_k}$ for $j,k\in\{1,2,3\}$ with $j\neq k$. 
	  			\begin{defn}[Degenerate/$\delta$-degenerate] \label{degen}
	  			We say that for $\delta\in (0,1)$ the triangle $\Delta(z_1,z_2,z_3)$ is \textit{$\delta$-close to degenerate}, if there exists different $j,k,l\in\{1,2,3\}$ such that
	  		\begin{equation}
			\begin{aligned}
 			\dfrac{d_{j,k}}{d_{j,l}+d_{k,l}}\geq 1-\delta. \label{eq:dreiecksungl}
	  		\end{aligned}
	 		\end{equation}
			If the quotient equals 1, we say that $\Delta(z_1,z_2,z_3)$ is \textit{degenerate}.
	 		\end{defn}

	  				By the triangle inequality we know that the quotient in \protect \eqref{eq:dreiecksungl} is always less than or equal to 1. Thus equality means that the points $z_1$, $z_2$ and $z_3$ lie on a line. In other words, if we have 
	  					\begin{equation}
	  					\begin{aligned}
	  					\mathcal{D}\left(\Delta(z_1,z_2,z_3)\right):=\max\left\{\dfrac{d_{j,k}}{d_{j,l}+d_{k,l}} : j,k,l\in\{1,2,3\} \text{ different} \right\} < 1-\delta, \label{eq:gutes_Dreieck} 
	 					\end{aligned}
	  					\end{equation}
	 						then the considered triangle is non-degenerate.
	  				
	   \begin{lem}[Distortion] \label{Kleine_Verzerrung}
	   	Let $U\subset \C$ be an open, bounded and convex set.  \\ Let $\Delta(z_1,z_2,z_3)\subset U$ and let $f\colon U \to \C$ be univalent. Suppose that $f(U)$ is convex. Then
	   	\begin{equation}
	   	\begin{aligned}
	   	\mathcal{D}\left(\Delta(f(z_1),f(z_2),f(z_3))\right) \leq \mathcal{L}\left(f|_{U}\right)\cdot\mathcal{D}\left(\Delta(z_1,z_2,z_3)\right). \label{eq:Verzerrung_Dreieck}
	   	\end{aligned}
	   	\end{equation}
	   \end{lem}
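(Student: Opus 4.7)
My plan is to bound each pairwise distance $\abs{f(z_j)-f(z_k)}$ from above in terms of $\abs{z_j-z_k}$ using the convexity of $U$, and from below using the convexity of $f(U)$, with the two bounds differing by a factor of exactly $\mathcal{L}(f|_U)$. Once I have this, the estimate \eqref{eq:Verzerrung_Dreieck} drops out by taking the maximum over the three choices of distinguished vertex $l$.

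First, I would establish the upper bound. Because $U$ is convex, the segment $[z_k,z_j]$ lies in $U$, so parametrizing it as $\tau(t)=z_k+t(z_j-z_k)$ for $t\in[0,1]$ gives
\[
f(z_j)-f(z_k)=\int_0^1 f'(\tau(t))\,(z_j-z_k)\,dt,
\]
from which
\[
\abs{f(z_j)-f(z_k)}\leq \sup_{U}\abs{f'}\cdot\abs{z_j-z_k}.
\]

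Next, I would establish the matching lower bound using the convexity of $f(U)$. The segment $\sigma(t)=f(z_k)+t(f(z_j)-f(z_k))$, $t\in[0,1]$, lies in $f(U)$, so its preimage $\gamma:=f^{-1}\circ\sigma$ is a curve in $U$ from $z_k$ to $z_j$. Since the straight segment is the shortest path,
\[
\abs{z_j-z_k}\leq \int_0^1\abs{\gamma'(t)}\,dt
=\abs{f(z_j)-f(z_k)}\int_0^1\frac{dt}{\abs{f'(\gamma(t))}}
\leq \frac{\abs{f(z_j)-f(z_k)}}{\inf_{U}\abs{f'}},
\]
which rearranges to $\abs{f(z_j)-f(z_k)}\geq \inf_{U}\abs{f'}\cdot\abs{z_j-z_k}$. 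Applying the two bounds to each pair and dividing gives
\[
\frac{\abs{f(z_j)-f(z_k)}}{\abs{f(z_j)-f(z_l)}+\abs{f(z_k)-f(z_l)}}
\leq \mathcal{L}(f|_U)\cdot\frac{\abs{z_j-z_k}}{\abs{z_j-z_l}+\abs{z_k-z_l}}
\leq \mathcal{L}(f|_U)\cdot\mathcal{D}(\Delta(z_1,z_2,z_3))
\]
for every choice of distinct $j,k,l$, and taking the maximum on the left yields \eqref{eq:Verzerrung_Dreieck}.

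There is no real obstacle here; the only subtle point is the lower bound, where one must resist the temptation to bound $\abs{f(z_j)-f(z_k)}$ below by integrating along the straight segment $[z_k,z_j]\subset U$ (which would only give an upper bound on a lower bound and miss the factor). The trick of pulling back the straight segment in the convex image $f(U)$ and using that straight lines minimize length is what makes both hypotheses (convexity of $U$ and of $f(U)$) enter, and delivers exactly the distortion $\mathcal{L}(f|_U)$ rather than a larger constant.
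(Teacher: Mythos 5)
Your proposal is correct and follows essentially the same route as the paper: the upper bound comes from integrating $f'$ along the segment $[z_k,z_j]\subset U$, and the lower bound from integrating $(f^{-1})'$ along the segment $[f(z_k),f(z_j)]\subset f(U)$ (your pullback curve $\gamma=f^{-1}\circ\sigma$ is just this in different notation), after which the ratio estimate and the maximum over $j,k,l$ give the claim exactly as in the paper.
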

	   \begin{proof}
	   	We have for all $j,k\in\{1,2,3\}$ with $j\neq k$ that
	   	\begin{equation}
	   	\begin{aligned}
	   	\abs{f(z_j)-f(z_k)} &= \abs{\int_{z_k}^{z_j} f'(z)\, dz}  \\
	   	&\leq d_{j,k} \cdot \sup\{\abs{f'(z)} : z\in [z_j,z_k] \} \\
	   	&\leq d_{j,k}\cdot \sup\{\abs{f'(z)} : z\in U \}.
	   	\end{aligned}
	   	\end{equation}
	   	Since $f$ is univalent in $U$, we have
	   	\begin{equation}
	   	\begin{aligned}
	   	d_{j,k}  &= \abs{\int_{f(z_k)}^{f(z_j)} \left(f^{-1}\right)'(z)\, dz}  \\
	   	&\leq  \abs{f(z_j)-f(z_k)} \cdot\sup\left\{\abs{\left(f^{-1}\right)'(z)} : z\in [f(z_j),f(z_k)] \right\} \\
	   	& \leq \dfrac{\abs{f(z_j)-f(z_k)}}{\inf\{\abs{f'(z)} : z\in U \}}.
	   	\end{aligned}
	   	\end{equation}
	   	Thus we obtain for all pairwise different $j,k,l\in\{1,2,3\}$
	   	\begin{equation}
	   	\begin{aligned}
	   	\dfrac{\abs{f(z_j)-f(z_k)}}{\abs{f(z_j)-f(z_l)}+\abs{f(z_k)-f(z_l)}} \leq \dfrac{d_{j,k}}{d_{j,l}+d_{k,l}} \cdot \dfrac{\sup\{\abs{f'(z)} : z\in U \}}{\inf\{\abs{f'(z)} : z\in U\}}.
	   	\end{aligned}
	   	\end{equation}
	   	This yields
	   	\begin{equation}
	   	\begin{aligned}
	   	\mathcal{D}\left(\Delta(f(z_1),f(z_2),f(z_3))\right) \leq \mathcal{L}\left(f|_{U}\right)\cdot \mathcal{D}\left(\Delta(z_1,z_2,z_3)\right).
	   	\qedhere  
	   	\end{aligned}
	   	\end{equation}
	   \end{proof}
	   
	   The next lemma follows by elementary geometry.
	   \begin{lem} \label{winkel_weg_von_0}
	   	Let $\Delta(z_1,z_2,z_3)\subset \C$ be non-degenerate in the sense of Definition \ref{degen} for some $\delta>0$. Then every angle in $\Delta(z_1,z_2,z_3)$ is bounded away from $0$ and $\pi$, with bounds depending only on $\delta$.
	   \end{lem}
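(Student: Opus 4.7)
The plan is to bound each interior angle of the triangle using the law of cosines together with the three ratio inequalities encoded in non-degeneracy. Denote by $\alpha$ the angle at vertex $z_l$ opposite to the side of length $d_{j,k}$; then
\[
\cos\alpha = \frac{d_{j,l}^{2}+d_{k,l}^{2}-d_{j,k}^{2}}{2\,d_{j,l}\,d_{k,l}}.
\]
Showing that $\alpha$ is bounded away from $0$ and from $\pi$ is equivalent to showing that both $1-\cos\alpha$ and $1+\cos\alpha$ admit positive lower bounds depending only on $\delta$.

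For the bound away from $\pi$, I would use the factorization
\[
1+\cos\alpha = \frac{(d_{j,l}+d_{k,l}-d_{j,k})(d_{j,l}+d_{k,l}+d_{j,k})}{2\,d_{j,l}\,d_{k,l}}.
\]
The inequality $d_{j,k}/(d_{j,l}+d_{k,l}) < 1-\delta$ from \eqref{eq:gutes_Dreieck} forces the first factor to exceed $\delta(d_{j,l}+d_{k,l})$, and the trivial estimate $d_{j,l}+d_{k,l}+d_{j,k}\geq d_{j,l}+d_{k,l}$ together with the AM-GM bound $(d_{j,l}+d_{k,l})^{2}\geq 4\,d_{j,l}\,d_{k,l}$ yields $1+\cos\alpha\geq 2\delta$.

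For the bound away from $0$, I would instead factor
\[
1-\cos\alpha = \frac{d_{j,k}^{2}-(d_{j,l}-d_{k,l})^{2}}{2\,d_{j,l}\,d_{k,l}},
\]
and assume without loss of generality that $d_{j,l}\geq d_{k,l}$. Applying \eqref{eq:gutes_Dreieck} now to the ratio $d_{j,l}/(d_{j,k}+d_{k,l})$ and rearranging gives the estimate $d_{j,k}-(d_{j,l}-d_{k,l}) \geq \delta\,d_{j,l}/(1-\delta)$. Since $d_{j,l}-d_{k,l}\geq 0$, the second factor $d_{j,k}+(d_{j,l}-d_{k,l})$ dominates the first, so the numerator is at least $\delta^{2} d_{j,l}^{2}/(1-\delta)^{2}$; substituting $d_{k,l}\leq d_{j,l}$ in the denominator then produces a lower bound on $1-\cos\alpha$ of the form $\delta^{2}/(2(1-\delta)^{2})$.

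The argument is entirely elementary and I do not anticipate any real obstacle; the one mild subtlety is that the two bounds must invoke \eqref{eq:gutes_Dreieck} for different permutations of the indices $\{j,k,l\}$, namely the ratio with $d_{j,k}$ in the numerator for the bound away from $\pi$, and the ratio featuring the larger adjacent side for the bound away from $0$.
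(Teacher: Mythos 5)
Your argument is correct: both factorizations of $1\pm\cos\alpha$ are right, the non-degeneracy hypothesis $\mathcal{D}(\Delta)<1-\delta$ does give the ratio inequality for \emph{every} permutation of the indices (so you are entitled to apply it once with $d_{j,k}$ in the numerator and once with the larger adjacent side $d_{j,l}$ in the numerator), and the resulting bounds $1+\cos\alpha\geq 2\delta$ and $1-\cos\alpha\geq \delta^{2}/(2(1-\delta)^{2})$ depend only on $\delta$, as required. The paper offers no proof of this lemma (it is dismissed as "elementary geometry"), and your law-of-cosines computation is a complete and valid instance of exactly such an argument.
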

%
%
	   
	   \begin{lem} \label{gamma_Dreieck}
	   	Let $\gamma\colon [a,b]\to\C$ be continuous and injective, and differentiable at $t_0\in[a,b]$ with $\gamma'(t_0)\neq 0$. Then for all $\delta \in (0,1)$ there exists $\varepsilon>0$ such that the triangle $\Delta(z_1,\gamma(t_0),z_2)$ is $\delta$-close to degenerate for all $z_1,z_2\in D(\gamma(t_0),\varepsilon)\cap \gamma([a,b])$.
	   \end{lem}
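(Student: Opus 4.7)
The plan is to transfer the problem from the image of $\gamma$ back to the parameter interval, and then to exploit the fact that near $t_0$ the curve is, up to first order, a straight line. Since $[a,b]$ is compact and $\gamma$ is a continuous injection, it is a homeomorphism onto its image; hence $\gamma^{-1}$ is uniformly continuous, and for any $\eta>0$ one can choose $\varepsilon>0$ so that every $z\in D(\gamma(t_0),\varepsilon)\cap\gamma([a,b])$ is of the form $z=\gamma(t)$ with $\abs{t-t_0}<\eta$.

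Set $c:=\gamma'(t_0)\neq 0$, $h_j:=t_j-t_0$, and write $z_j-\gamma(t_0)=ch_j+r_j$. The definition of the derivative lets me shrink $\eta$ so that $\abs{r_j}\le\delta'\abs{h_j}$ for any prescribed $\delta'\in(0,\abs{c})$. Using $d_{1,0}:=\abs{z_1-\gamma(t_0)}$, $d_{2,0}:=\abs{z_2-\gamma(t_0)}$, $d_{1,2}:=\abs{z_1-z_2}$, I split into two cases according to the sign of $h_1h_2$ (recall that $h_j\neq 0$ and $h_1\neq h_2$ since the three vertices of $\Delta(z_1,\gamma(t_0),z_2)$ are distinct). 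If $h_1h_2<0$, then $\abs{h_1-h_2}=\abs{h_1}+\abs{h_2}$, and applying the triangle inequality to the decomposition above gives
\[
\abs{z_1-z_2}\ge(\abs{c}-\delta')(\abs{h_1}+\abs{h_2}),\qquad d_{1,0}+d_{2,0}\le(\abs{c}+\delta')(\abs{h_1}+\abs{h_2}),
\]
so that $d_{1,2}/(d_{1,0}+d_{2,0})\ge(\abs{c}-\delta')/(\abs{c}+\delta')$. If instead $h_1h_2\ge 0$, I may assume $\abs{h_2}\ge\abs{h_1}$, and a parallel computation (in which $\abs{h_1}\le\abs{h_2}$ is used to absorb the $\delta'\abs{h_1}$ error terms into an $O(\delta')\abs{h_2}$ contribution) produces
\[
\frac{d_{2,0}}{d_{1,0}+d_{1,2}}\ge\frac{\abs{c}-\delta'}{\abs{c}+3\delta'}.
\]

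Both lower bounds tend to $1$ as $\delta'\to 0$, so by fixing $\delta'$ small enough that each of them is at least $1-\delta$, and then selecting $\eta$ and hence $\varepsilon$ accordingly, I force the maximum in \eqref{eq:gutes_Dreieck} to exceed $1-\delta$, which is precisely the definition of $\Delta(z_1,\gamma(t_0),z_2)$ being $\delta$-close to degenerate. The only delicate part is the second case: the denominator contains the small quantity $\abs{z_1-z_2}$ rather than two distances from the corners to $\gamma(t_0)$, so one has to be careful that the first-order error terms do not swamp it. Apart from this bit of bookkeeping, the argument is a quantitative rendering of the intuitive picture that a differentiable injective arc is locally close to its tangent line, which is exactly why the three points are forced to be almost collinear.
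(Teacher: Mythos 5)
Your proof is correct. It shares the paper's skeleton — injectivity of $\gamma$ on the compact interval to ensure that $\abs{\gamma(t)-\gamma(t_0)}<\varepsilon$ forces $\abs{t-t_0}$ small, followed by the first-order expansion $\gamma(t_j)-\gamma(t_0)=(\gamma'(t_0)+\kappa(t_j))(t_j-t_0)$ — but the concluding step is handled differently. The paper controls $\operatorname{arg}(\gamma(t_j)-\gamma(t_0))$ and deduces that the angle of the triangle at $\gamma(t_0)$ lies within $2\eta$ of $0$ or $\pi$, leaving implicit the elementary (but quantitative) passage from ``angle near $0$ or $\pi$'' to the ratio condition $d_{j,k}/(d_{j,l}+d_{k,l})\geq 1-\delta$ in Definition \ref{degen}. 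You instead estimate the three side lengths directly, with the case split on the sign of $h_1h_2$ playing the role of the paper's dichotomy between $t_j>t_0$ and $t_j<t_0$, and you land exactly on the quantity appearing in \eqref{eq:gutes_Dreieck}; your bounds $(\abs{c}-\delta')/(\abs{c}+\delta')$ and $(\abs{c}-\delta')/(\abs{c}+3\delta')$ are both verified correct, and your remark that the second case is the delicate one (the small side $\abs{z_1-z_2}$ sits in the denominator) is exactly where the care is needed. The trade-off: the paper's angle formulation is geometrically transparent and connects naturally to Lemma \ref{winkel_weg_von_0}, while your version is fully self-contained in that it never needs the unstated angle-to-ratio conversion.
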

	   \begin{proof}
	   	Since $\gamma$ is injective, we know that for all $\delta >0$ there exists $\varepsilon>0$ such that for all $t\in [a,b]$ with $\abs{\gamma(t)-\gamma(t_0)}<\varepsilon$ we have $\abs{t-t_0}<\delta$. For all $t\in [a,b]\setminus\{t_0\}$ we have
	   	\begin{equation}
	   	\begin{aligned}
	   	\dfrac{\gamma(t)-\gamma(t_0)}{t-t_0} = \gamma'(t_0) + \kappa(t)
	   	\end{aligned}
	   	\end{equation}
	   	with $\kappa(t)\to 0$ as $t\to t_0$. Let $\eta >0$ be small. Then  we can choose $\delta$ small enough such that for all $t\in [a,b]$ with $\abs{t-t_0}<\delta$ we have $\abs{\kappa(t)}< \abs{\gamma'(t_0)}\sin \eta$.
	   	
	   	Let $z_1,z_2\in \left(D(\gamma(t_0),\varepsilon)\cap \gamma([a,b])\right)\setminus\{\gamma(t_0)\}$, say $z_1=\gamma(t_1)$ and $z_2=\gamma(t_2)$. Then $\abs{t_j-t_0}<\delta$ and we obtain
	   	\begin{equation}
	   	\begin{aligned}
	   	\abs{\operatorname{arg}\left(\gamma'(t_0)+\kappa(t_j)\right)-\operatorname{arg}\left(\gamma'(t_0)\right)}\leq \arcsin\left(\dfrac{\abs{\gamma'(t_0)}\sin\eta }{\abs{\gamma'(t_0)}}\right) = \eta
	   	\end{aligned}
	   	\end{equation}
	   	for $j\in\{1,2\}$. Thus the equation
	   	\begin{equation}
	   	\begin{aligned}
	   	\gamma(t_j)-\gamma(t_0) = \left(\gamma'(t_0) + \kappa(t_j)\right)(t_j-t_0)
	   	\end{aligned}
	   	\end{equation}
	   	yields 
	   	\begin{equation}
	   	\begin{aligned}
	   	\operatorname{arg}\left(\gamma(t_j)-\gamma(t_0)\right) = \operatorname{arg} \gamma'(t_0) +r_j,
	   	\end{aligned}
	   	\end{equation}
	   	where $\abs{r_j}\leq \eta$ for $t_j >t_0$ and $\abs{r_j-\pi}\leq \eta$ for $t_j < t_0$ for a suitable choice of the argument. Hence
	   	\begin{equation}
	   	\begin{aligned}
	   	\abs{\operatorname{arg}\left(\gamma(t_2)-\gamma(t_0)\right)-\operatorname{arg}\left(\gamma(t_1)-\gamma(t_0)\right)} \leq 2\eta
	   	\end{aligned}
	   	\end{equation}
	   	or
	   	\begin{equation}
	   	\begin{aligned}
	   	\abs{\operatorname{arg}\left(\gamma(t_2)-\gamma(t_0)\right)-\operatorname{arg}\left(\gamma(t_1)-\gamma(t_0)\right)-\pi} \leq 2\eta.
	   	\end{aligned}
	   	\end{equation}
	   	Noticing that $\operatorname{arg}\left(\gamma(t_2)-\gamma(t_0)\right)-\operatorname{arg}\left(\gamma(t_1)-\gamma(t_0)\right)$ is the angle of the triangle at $\gamma(t_0)$, the conclusion follows.
	   \end{proof}
	
	Now we introduce a key tool given by \cite{EL92}, the so-called logarithmic change of variable, to study functions in class $\B$. We have the following result.
	
	\begin{thm}[Logarithmic change of variable]
		Let $f\in\B$ and let $R>\abs{f(0)}$ be such that $\sing(f^{-1})\subset D(0,R)$. Let 
		\begin{equation}
		\begin{aligned}
		W:=\exp^{-1}\left(f^{-1}\left(\left\{z\in\C : \abs{z}>R\right\}\right)\right)
		\end{aligned}
		\end{equation}
		and
		\begin{equation}
		\begin{aligned}
		\mathbb{H}:= \{z\in\C : \Re z > \log R\}.
		\end{aligned}
		\end{equation}
		Then every component of $W$ is simply connected and there exists a holomorphic function $F\colon W\to \mathbb{H}$ such that
		\begin{equation}
		\begin{aligned}
		\exp F(z) = f(\exp z) 
		\end{aligned}
		\end{equation}
		for all $z\in W$. Moreover, for every component $U$ of $W$ the restriction $F|_U$ is a conformal map from $U$ onto $\mathbb{H}$.
	\end{thm}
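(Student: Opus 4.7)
The plan is to construct $F$ separately on each connected component of $W$ by factoring $f\circ\exp$ through the universal covering $\exp\colon \mathbb{H}\to\{z\in\C : |z|>R\}$. The structural input that makes this possible is that, under the hypothesis $\sing(f^{-1})\subset D(0,R)$, the restriction of $f$ is an unbranched covering onto the set of points outside $D(0,R)$.

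First, I would set $V:=f^{-1}(\{z\in\C : |z|>R\})$ and verify that $f|_V\colon V\to\{z\in\C : |z|>R\}$ is a covering map: since $\sing(f^{-1})\cap\{z\in\C : |z|>R\}=\emptyset$, the map $f|_V$ is a local biholomorphism (no critical values) and has the path-lifting property (no asymptotic values over $\{z\in\C : |z|>R\}$), and these two properties together characterise covering maps. Second, I would show that every component $V_0$ of $V$ is unbounded and simply connected. Unboundedness is immediate from the maximum modulus principle applied to a supposedly bounded $V_0$: continuity gives $|f|=R$ on $\partial V_0$ while $|f|>R$ in $V_0$, a contradiction. Simple connectedness uses the classification of connected covers of $\{z\in\C : |z|>R\}$ (whose fundamental group is $\Z$): $V_0$ is either a universal cover, in which case it is simply connected, or a finite cyclic cover homeomorphic to an annulus; the latter is ruled out by combining the maximum modulus principle and the argument principle on the bounded complementary component of $V_0$, together with the essential singularity of $f$ at $\infty$. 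This topological step is the principal obstacle.

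Once simple connectedness of each $V_0$ is in hand, the construction of $F$ is formal. The cover $f|_{V_0}\colon V_0\to\{z\in\C : |z|>R\}$ is a universal cover and so is $\exp\colon \mathbb{H}\to\{z\in\C : |z|>R\}$; by uniqueness of the universal cover up to isomorphism, there exists a biholomorphism $\psi\colon V_0\to\mathbb{H}$ with $\exp\circ\psi = f|_{V_0}$. At the same time, the simple connectedness of $V_0$ together with the covering $\exp\colon \C\to \C\setminus\{0\}$ implies that every connected component $U$ of $\exp^{-1}(V_0)$ is mapped biholomorphically onto $V_0$ by $\exp|_U$. Setting
\begin{equation}
F|_U := \psi\circ(\exp|_U)\colon U\to\mathbb{H},
\end{equation}
we obtain a biholomorphism satisfying
\begin{equation}
\exp\bigl(F|_U(z)\bigr) = \exp(\psi(\exp z)) = f(\exp z)\qquad\text{for all } z\in U,
\end{equation}
so that every component of $W$ is simply connected (being biholomorphic to $\mathbb{H}$) and $F|_U$ is a conformal map onto $\mathbb{H}$.

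The main obstacle is the simple connectedness of the tracts $V_0$; the remainder is a routine assembly of covering-space theory and uniqueness of the universal cover. The topological classification of tracts depends genuinely on the fact that $f$ is entire with an essential singularity at $\infty$, which is what rules out the annular case.
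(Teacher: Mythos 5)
The paper itself gives no proof of this statement; it is quoted as a known tool from Eremenko--Lyubich \cite{EL92}, so there is nothing internal to compare against. Your proposal reconstructs the standard argument: $f$ restricted to $V:=f^{-1}(\{|z|>R\})$ is an unbranched covering of $\{z : |z|>R\}$ because that region contains no singular values, each tract $V_0$ is classified by covering-space theory over a base with fundamental group $\Z$, the annular case is excluded, and then $F$ is assembled from the universal-cover isomorphism $\psi\colon V_0\to\mathbb{H}$ and the biholomorphisms $\exp|_U\colon U\to V_0$. This is the right structure, and the final assembly is correct; note that the step ``every component $U$ of $\exp^{-1}(V_0)$ maps biholomorphically onto $V_0$'' uses $0\notin V_0$, which is exactly where the hypothesis $R>|f(0)|$ enters --- worth saying explicitly.

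The one place where your sketch does not actually close is the step you yourself flag as the principal obstacle: ruling out a finite-degree (annular) tract. The argument principle applied to a core curve of the annulus only tells you that $f$ has $d\geq 1$ zeros in the bounded complementary component $K$; since $0\in D(0,R)$ is a perfectly admissible value of $f$, this is no contradiction. The standard way to finish is instead: a finite-degree covering is proper, so $E:=f^{-1}(\{|w|=2R\})\cap V_0$ is compact; the nonempty open set $V_0^{+}:=\{z\in V_0 : |f(z)|>2R\}$ has boundary contained in $E$ (boundary points on $\partial V_0$ are impossible since $|f|=R$ there while $|f|\geq 2R$ on $\overline{V_0^{+}}$), hence compact boundary; a component of $V_0^{+}$ cannot be bounded by the maximum modulus principle, so $V_0^{+}$ contains $\{|z|>\rho\}$ for some $\rho$, and there $f$ omits the disc $D(0,2R)$ --- contradicting Casorati--Weierstrass at the essential singularity $\infty$. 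So the essential singularity is indeed the decisive ingredient, but it is reached via properness and the maximum principle rather than via the argument principle on $K$. With that step repaired, the proof is complete.
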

	The function $F$ obtained by the logarithmic change of variable of $f$ is called a \textit{logarithmic transform of }$f$. Moreover, we call every component of $W$ a \textit{tract}.
	
	 There are several ways to define disjoint type functions. \cite[Lemma 3.1]{Bar07} or \cite[Proposition 2.8]{MB12} provide the definition of disjoint type entire functions which we will use in the sequel.
	
	\begin{defn}[Disjoint type functions] \label{disjoint type}
		A transcendental entire function $f$ is said to be of \textit{disjoint type} if there exists a bounded Jordan domain $D\subset \C$ such that $\sing(f^{-1})\subset D$ and $f\left(\overline{D}\right) \subset D$.
	\end{defn}
	
		\begin{remark}\label{f_0=lambda f}
			Since the set $\sing(f^{-1})$ is bounded for every function $f\in\B$, one can obtain a function of disjoint type by considering the function $f_0:=\lambda f$ for small $\lambda \in \R$. This is precisely the situation which was considered in Theorem \ref{RRRS}.
			
			 If in addition $\lambda$ is chosen small enough, then there exists a constant $K > 1$ such that $\abs{F'(z)}\geq K$ for all $z\in W$ and $\sing(f_0^{-1})\subset \D$. Moreover, we have $D\subset \F(f_0)$ by Montel's theorem.
		\end{remark}
		
		Following \cite[Section 3]{BJR12}, we can define disjoint type logarithmic transforms as follows.
		
	\begin{defn}[Disjoint type for $F$]
		Let $F \colon W \to \mathbb{H}$ be a logarithmic transform of the function $f\in\B$. Then we say that $F$ is of \textit{disjoint type}, if $\overline{W}\subset \mathbb{H}$.
	\end{defn}
	
	We only state the part of \cite[Lemma 3.1]{BJR12} which is relevant for us.
	
	\begin{lem}[Connection between disjoint type functions]
		Let $F\colon W\to \mathbb{H}$ be a logarithmic transform of the function $f\in\B$. Then $f$ is of disjoint type if and only if $F$ is of disjoint type.
	\end{lem}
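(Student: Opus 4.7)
The plan is to translate the inclusion $\overline{W}\subset\mathbb{H}$ through the exponential map into an inclusion on the $z$-plane and then recognise it as the disjoint-type condition for $f$ witnessed by the disk $D(0,R)$. Concretely, the proof should reduce to the equivalence between $\overline{W}\subset\mathbb{H}$ and $f(\overline{D(0,R)})\subset D(0,R)$.

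First I would verify the intermediate equivalence $\overline{W}\subset\mathbb{H}\iff\overline{V}\subset\{|w|>R\}$, where $V:=f^{-1}(\{|w|>R\})$. Because $\exp F(z)=f(\exp z)$, we have $\exp(W)=V$; since $W$ is $2\pi i$-periodic and $\exp$ is a local homeomorphism onto $\C\setminus\{0\}$, any $w\in\overline{V}$ admits a lift to some $z\in\overline{W}$ with $\Re z=\log|w|$ by choosing $\Im z\in[0,2\pi)$ and passing to a convergent subsequence, while continuity of $\exp$ sends sequences in $\overline{W}$ to sequences in $\overline{V}$ in the other direction. This transfers the inclusion across both sides.

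Next I would show $\overline{V}\subset\{|w|>R\}$ is equivalent to $f(\overline{D(0,R)})\subset D(0,R)$. Suppose $v$ satisfies $|v|\leq R$ and $|f(v)|\geq R$. If $|f(v)|>R$, then $v\in V\subset\{|w|>R\}$, a contradiction. If $|f(v)|=R$, then $v$ cannot be a critical point of $f$: otherwise $f(v)$ would be a critical value and thus in $\sing(f^{-1})\subset D(0,R)$, forcing $|f(v)|<R$. Since $f$ is open at $v$, a neighbourhood of $v$ contains points $v'$ with $|f(v')|>R$, placing $v\in\overline{V}\subset\{|w|>R\}$, again a contradiction. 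The reverse implication follows because $\partial V\subset f^{-1}(\{|w|=R\})$ by continuity of $f$, so if every point with $|f|=R$ has $|v|>R$ and $V\cap\overline{D(0,R)}=\emptyset$, then $\overline{V}\cap\overline{D(0,R)}=\emptyset$.

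Combining, $F$ is of disjoint type if and only if the disk $D(0,R)$ serves as a disjoint-type Jordan domain for $f$, which gives $(\Leftarrow)$ immediately. For the $(\Rightarrow)$ direction, given $f$ of disjoint type with some Jordan domain $D_{0}$, one must produce an $R$ for which the disk $D(0,R)$ also works; I expect this to be the main obstacle, since the definition permits an arbitrary Jordan domain rather than a disk. In the rescaled setting $f_{0}=\lambda\tilde{f}$ used throughout the paper, however, for $\lambda$ sufficiently small a direct maximum-modulus estimate yields $f_{0}(\overline{D(0,1)})\subset D(0,1)$, which together with $\sing(f_{0}^{-1})\subset\mathbb{D}$ supplies the required $R=1$, so the equivalence applies exactly where the paper needs it.
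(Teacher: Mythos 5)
The paper offers no proof of this lemma; it is quoted from [BJR12, Lemma 3.1], so there is nothing in-paper to compare against and your argument has to stand on its own. The first three steps do: the lifting argument giving $\overline{W}\subset\mathbb{H}\Leftrightarrow\overline{V}\subset\{|w|>R\}$, and the equivalence of the latter with $f\left(\overline{D(0,R)}\right)\subset D(0,R)$, are correct (the case $|f(v)|=R$ needs no discussion of critical points, since a nonconstant holomorphic map is open at critical points too). This cleanly yields the direction ``$F$ of disjoint type $\Rightarrow$ $f$ of disjoint type'', with $D(0,R)$ as witness.

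The obstacle you flag in the converse is a genuine gap, and it cannot be closed in the generality of the statement as written: for a fixed admissible $R$ the implication is false. Take $f(z)=e^{-10}e^{z}$, which is of disjoint type with witness $D(0,1)$, and choose $R=100$, which is admissible since $\sing(f^{-1})=\{0\}\subset D(0,100)$ and $|f(0)|=e^{-10}<100$. Then $f^{-1}(\{|w|>100\})=\{z:\Re z>10+\log 100\}$ contains points of modulus far less than $100$, so by your own first step $\overline{W}\not\subset\mathbb{H}$ and this logarithmic transform is not of disjoint type. The honest statement (and what [BJR12] actually proves) is either that $f$ is of disjoint type if and only if \emph{some} logarithmic transform of $f$ is, or one must work with the more flexible setup of [BJR12] in which $\mathbb{H}$ may be replaced by the $2\pi i$-periodic domain $\exp^{-1}\left(\C\setminus\overline{D}\right)$; in the latter setting the converse follows at once, since $\overline{D}\subset f^{-1}(D)$ gives an open neighbourhood of $\overline{D}$ disjoint from $f^{-1}\left(\C\setminus\overline{D}\right)$ and hence from its closure. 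Your fallback observation is the right practical resolution: in this paper only the rescaled map $f_{0}=\lambda f$ is ever used, and there $\sing(f_{0}^{-1})\subset D(0,e^{R})$ together with the elementary estimate $f_{0}\left(\overline{D(0,e^{R})}\right)\subset D(0,e^{R})$ for small $\lambda$ verifies the disk condition directly, which by your steps 1 and 2 is exactly what disjoint type for $F$ requires.
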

	
	\begin{remark} \label{Baranski2}
			It was shown in \cite[Proposition 4.2, Corollary 4.4]{BJR12} that the Julia set $\J(F)$ of a logarithmic transform $F\colon W\to \mathbb{H}$ of a disjoint type function $f$ of finite order also consists of uncountable union of hairs which are pairwise disjoint, where
			\begin{equation}
			\begin{aligned}
			\J(F):=\{z\in \overline{W} : F^j(z)\in\overline{W} \text{ for all } j\in\N_0\}.
			\end{aligned}
			\end{equation} 
			Note that this continuous extension of $F|_W$ exists by considering a smaller value of $R$.
			
			Moreover, we have $\exp(\J(F))=\J(f)$. This yields that, given a parametrization $\gamma$ of a hair $H\subset \J(F)$, the function $\exp \circ \gamma$ is a parametrization of the hair $\exp(H)$ in $\J(f)$. Thus $H$ is nowhere differentiable if and only if $\exp(H)$ is nowhere differentiable.
	\end{remark}
	
	\begin{remark}\label{Tracts_log_coord}
	Now we want to discuss the existence of hairs.
	Given a function $f\in \mathcal{B}$ such that the set $T:=\Log\left(\left(f^{-1}\left(\left\{z\in\C : \abs{z}>R\right\}\right)\right)\right)$ consists of exactly one component, where $\Log$ denotes the principle branch of the logarithm, we put $T(0):=T$ and for $k\in\Z$ denote $T(k):=T(0)+2\pi ik$. With
	\begin{equation}
	\begin{aligned}
	\mathcal{T}:= \bigcup_{k\in\Z} T(k)
	\end{aligned}
	\end{equation}
	the logarithmic transform of $f$ takes the form $F\colon \mathcal{T}\to \mathbb{H}$.
	\end{remark}
	\begin{defn}[External address]
		For each $z\in J(F)$ we call the sequence
		\begin{equation}
		\ul{s}(z):=s_0 s_1 s_2 \dots = (s_k)_{k\geq 0} \in \Z^{\N_0}    
		\end{equation}
		such that $F^k(z)\in T(s_k)$ for all $k\geq 0$ the \textit{external address of $z$}.
	\end{defn}
	
	For disjoint type maps of finite order we will use the following Theorem \cite[Theorem 4.7]{RRRS11}.
	\begin{thm}\label{Hairs}
		Let $\ul{s}\in \Z^{\N_0}$. Then $\{z\in J(F) : \ul{s}(z)=\ul{s}\}$ is either empty or a hair. 
	\end{thm}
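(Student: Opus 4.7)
Assume $J_{\ul{s}} \neq \emptyset$ and write $\ul{s} = (s_k)_{k \geq 0}$. The plan is to construct, following \cite{RRRS11}, a homeomorphism $\gamma \colon [0,\infty) \to J_{\ul{s}}$ with $\gamma(t) \to \infty$ as $t \to \infty$. Let $\phi_k \colon \mathbb{H} \to T(s_k)$ be the conformal inverse of $F|_{T(s_k)}$ provided by the logarithmic change of variable, and set $\Phi_n := \phi_0 \circ \phi_1 \circ \cdots \circ \phi_{n-1}$, a conformal map from $\mathbb{H}$ into $T(s_0)$. By Remark \ref{f_0=lambda f} we may assume $\abs{F'(z)} \geq K$ on $\mathcal{T}$ for some $K > 1$, so each $\Phi_n$ contracts Euclidean distances by at least $K^{-n}$. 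For any $z \in J_{\ul{s}}$ one has $z = \Phi_n(F^n(z))$, so $z$ is uniquely determined by any bounded reference sequence that shadows its orbit.

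To build the parametrization I would use the finite-order hypothesis to strengthen the contraction at large real parts. Via the logarithmic change of variable, finite order translates into an upper bound of the form $\abs{F(z)} \leq \exp(C (\Re z)^\mu)$ on $\mathcal{T}$, and Koebe's theorem in the form of Remark \ref{Koebe_Verzerrung} then gives a quantitative lower bound for $\abs{F'(z)}$ in terms of $\Re z$; equivalently, $\abs{\phi_k'(w)}$ decays in $\Re w$. Fix a reference curve $\sigma_n \colon [0, \infty) \to T(s_n)$, for instance a horizontal ray at a fixed imaginary height of $T(s_n)$ running out to $+\infty$ in the real direction. Combining the uniform contraction rate $K^{-n}$ with the large-$\Re$ estimate, the sequence $\Phi_n(\sigma_n(t))$ is Cauchy in $n$ for each $t$ and converges uniformly on compact $t$-sets to a continuous map
\begin{equation}
\gamma(t) := \lim_{n \to \infty} \Phi_n\bigl(\sigma_n(t)\bigr),
\end{equation}
whose image lies in $J_{\ul{s}}$ and whose real part tends to $\infty$ as $t \to \infty$.

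The main obstacle is showing that $\gamma$ is a homeomorphism onto \emph{all} of $J_{\ul{s}}$, not only onto the escaping portion directly produced by the shadowing construction. This is handled by the \emph{head-start condition} of \cite{RRRS11}: using the finite-order growth bound one proves that for any $z, z' \in J_{\ul{s}}$ with $z \neq z'$, there is $n_0$ such that one of the sequences $(\Re F^n(z))$ and $(\Re F^n(z'))$ strictly dominates the other for all $n \geq n_0$, with a quantitative gap. This yields a total order on $J_{\ul{s}}$ compatible with the monotone parameter $t$ constructed above, and a standard order-topological argument then forces the order type to be that of $[0,\infty)$, with $\gamma(0)$ playing the role of the (possibly non-escaping) endpoint. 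Establishing the head-start condition from the finite-order derivative estimate is the genuinely delicate step; once it is in place, injectivity, surjectivity onto $J_{\ul{s}}$ and the homeomorphism property follow by combining it with the pointwise contraction and convergence of $\Phi_n$ derived above.
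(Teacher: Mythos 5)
The paper does not actually prove this statement: it is quoted verbatim from \cite[Theorem 4.7]{RRRS11}, so there is no in-paper argument to compare yours against. Judged on its own terms, your outline correctly identifies the strategy of \cite{RRRS11} --- pull back reference data under the inverse branches $\Phi_n$, use uniform expansion for convergence, and use a head-start condition to totally order $J_{\ul{s}}$ and conclude it is an arc --- but as a proof it has genuine gaps at exactly the points you label ``delicate'' or ``standard''. First, the head-start condition is the entire content of the theorem in the finite-order case: in \cite{RRRS11} it is derived not from a derivative estimate but from the geometry of the tracts (finite order forces the tracts, in logarithmic coordinates, to have bounded slope and bounded wiggling, and these two geometric conditions imply a linear head-start condition); none of this is carried out in your sketch. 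Relatedly, your translation of finite order into logarithmic coordinates is wrong in form: from $\abs{f(e^w)}\leq \exp\left(\abs{e^w}^{\mu}\right)$ one gets $\Re F(w)\leq \exp(\mu\,\Re w)$, an exponential bound in $\Re w$, not $\abs{F(z)}\leq\exp\left(C(\Re z)^{\mu}\right)$.

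Second, the convergence of $\Phi_n(\sigma_n(t))$ does not follow from the contraction $\abs{\phi_k'}\leq K^{-1}$ alone: to compare $\Phi_n(\sigma_n(t))$ with $\Phi_{n+1}(\sigma_{n+1}(t))$ you must control the distance between $F(\sigma_n(t))$ and a suitably reparametrized point of $\sigma_{n+1}$, i.e., the reference rays must shadow one another under $F$, and this distance grows with the real part; this bookkeeping, together with the choice of parametrization making $\gamma$ injective and proper, is where \cite{RRRS11} spends most of its effort. Finally, the ``standard order-topological argument'' that a totally ordered set with the right topology is homeomorphic to $[0,\infty)$ is itself a nontrivial step (in \cite{RRRS11} it rests on a characterization of linearly ordered topological spaces that are arcs), and surjectivity onto all of $J_{\ul{s}}$, including a possibly non-escaping endpoint, requires separate care. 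In short, your proposal is a correct road map of the cited proof, but not a proof.
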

	
	\begin{remark}\label{Re F}
	For a logarithmic transform of $F$ of a disjoint type map $f$ of finite order the hairs of $F$ except for the endpoints belong to the escaping set of $F$, i.e.\ given a parametrization $\gamma\colon [0,\infty)\to\C$ of a hair of $F$, we have $\Re F^n|_{\gamma((0,\infty))} \to \infty$ locally uniformly as $n\to\infty$.
	\end{remark}
	
	\section{Construction of the functions and the Cauchy integral method}
	In this section we use for $\alpha\in\R$ and $\beta >0$ the notation
	\begin{equation}
	\begin{aligned}
	S_{\alpha,\beta}:=\{z\in\C : \Re(z)>\alpha, \abs{\Im(z)}< \beta\}. 
	\end{aligned}
	\end{equation}
	The original Cauchy integral method was established by P\'olya and Szeg\"o \cite[p.\ 115, ex.\ 158]{PS72} using the function 
	\begin{equation}
	\begin{aligned}	
	\tilde{f}(z)= \dfrac{1}{2\pi i}\int_{\gamma} \dfrac{e^{e^\zeta}}{\zeta -z} d\zeta,
	\end{aligned}
	\end{equation}
	where $\gamma$ is a curve going through the boundary of $S_{0,\pi}=\{z\in\C : \Re(z)>0, \abs{\Im(z)} < \pi \}$ in clockwise direction and $z \in \C\setminus \overline{S_{0,\pi}}$. This function is bounded in $\C\setminus\overline{S_{0,\pi}}$. By considering the integral over $\partial S_{\alpha,\pi}$ with $\alpha>0$ instead of $\gamma$ and letting $\alpha\to\infty$, one can extend the function $\tilde{f}$ to an entire function $f$ in such a way that
	\begin{equation}
	\begin{aligned}
	f(z)=
	\begin{dcases}
	\tilde{f}(z) & \text{, if } z\in \C\setminus \overline{S_{0,\pi}},\\
	\tilde{f}(z)+e^{e^z} & \text{, if } z \in S_{0,\pi}. \\
	\end{dcases}
	\end{aligned}
	\end{equation}
	
	For a thorough discussion on the Cauchy integral method we refer to \cite[Theorem 1.7]{RG14}.
	
	In the following we consider a tract which looks like a sine-like halfstrip in logarithmic coordinates. Our aim is to apply the Cauchy integral method in `standard coordinates' to obtain a suitable function of bounded type. 
	
	To realize this tract, we define the function
	\begin{equation}
	\begin{aligned}
	h \colon \C \to \C,\quad h(z)=5\pi z+2\pi i\sin z. 
	\end{aligned}
	\end{equation}
	Then the function $h|_{S_{-1,\pi/3}}$ is univalent since $\Re h'(z)>0$ and this implies that
	\begin{equation}
	S_{0,\pi} \subset h(S_{-1,\pi/3}).
	\end{equation}
	
	 \begin{remark} \label{tract_constants}
		For our purpose it is useful to show that $T:=h^{-1}(S_{0,\pi})$ contains no straight line which is unbounded to the right. 
		
		Since 
		\begin{equation}
		\begin{aligned}
		\Im h(x+iy) = 5\pi y + 2\pi \sin x \cdot \cosh y,
		\end{aligned}
		\end{equation}
		we first consider the equation $\Im h(x+iy)=\pi$ for the upper boundary of $T$. Thus we have for all $k\in\Z$ and $x=x_k=\pi/2 + 2\pi k$
		\begin{equation} \label{eq:pi}
		\begin{aligned}
		5\pi y + 2\pi \cosh y = \pi
		\end{aligned}
		\end{equation}
		and since $\cosh y \geq 1$, we obtain $y\leq -1/5$. For the lower boundary of $T$ we consider the equation $\Im h(x+iy)=-\pi$ and with $x=x_k=-\pi/2 + 2\pi k$ we obtain
		\begin{equation} \label{eq:-pi}
		\begin{aligned}
		5\pi y - 2\pi \cosh y = -\pi
		\end{aligned}
		\end{equation}
		and hence $y\geq 1/5$. This implies that $T$ contains no straight line.
		
		For the proof of Theorem \ref{THM2} we need a bit more information about the geometry of $T$.
		If $y=0$, then the equation $\Im h(x+iy)=\pi$ yields $2\pi \sin x = \pi$ and thus we have $\sin x = 1/2$. Hence the solutions in $[0,2\pi]$ are $\pi/3$ and $2\pi/3$. Similarly, we obtain for $y=0$ in $\Im h(x+iy)=-\pi$ the solutions $4\pi/3$ and $5\pi/3$.

	Note that $T$ is $2\pi$-periodic, i.e.\ if $z\in T$, then $z+2\pi \in T$. Using the fact that $\abs{y}\leq \pi/3$ and equation $\Im h(x+iy)=\pi$ with $x=x_k=-\pi/2 + 2\pi k$ resp. $\Im h(x+iy)=-\pi$ and $x=x_k=\pi/2+2\pi k$, we similarly obtain that $\abs{\Im(z)}\leq 7/8$ for all $z\in T$. More precisely, one can show that $\abs{\Im(z)}\leq 3/4$ for all $z\in T$.
	 \end{remark}
	\begin{figure}[H]
		\begin{center}
			\includegraphics[height=8.5cm]{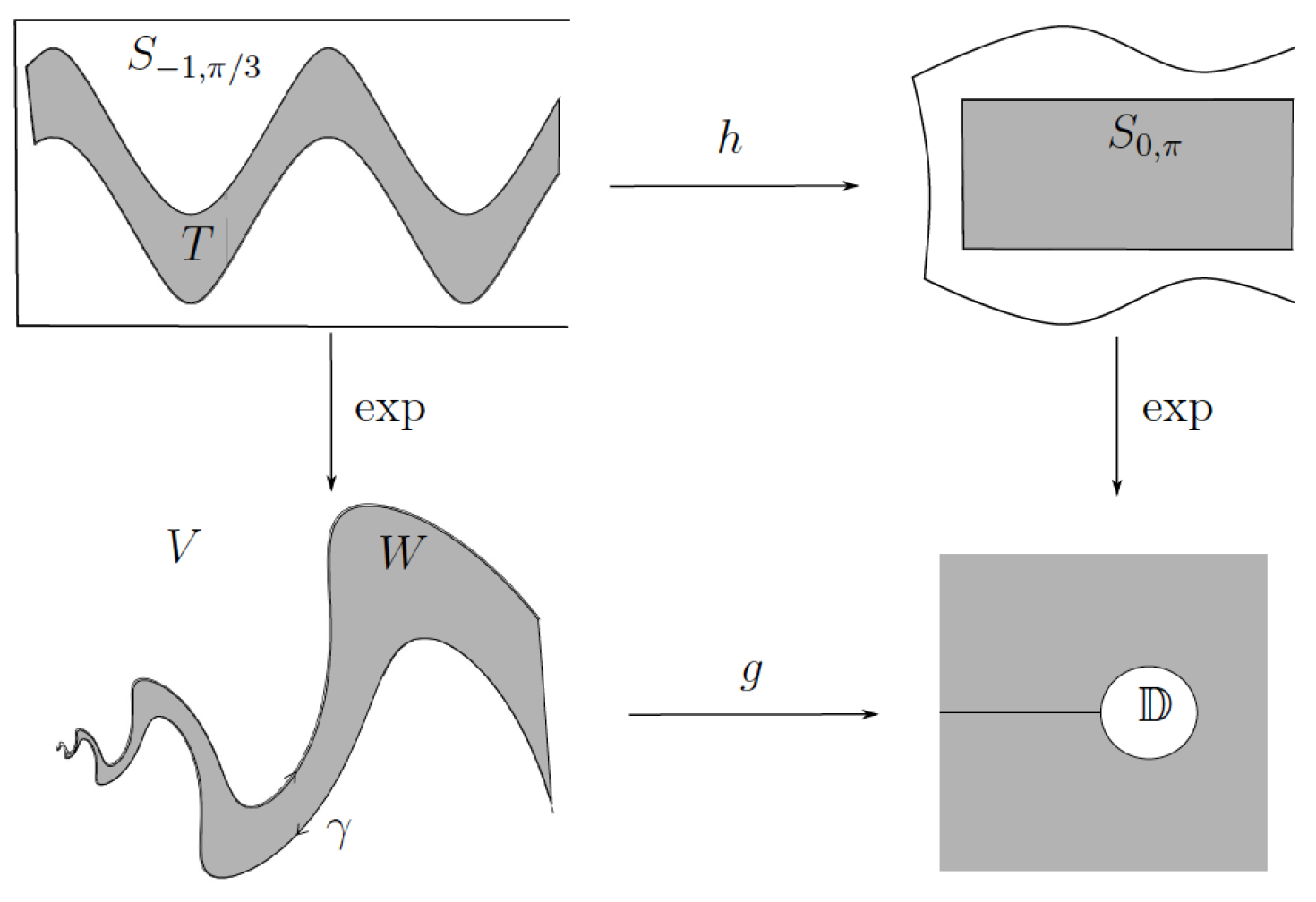}
			\caption{Construction of the function $g$ using the logarithmic change of variable of the map $h$.}
		\end{center}
	\end{figure}
	
	We define
	\begin{equation}
	g \colon \left\{z\in\C : \abs{z}>1/e , \abs{\arg z}<\pi/3\right\} \to \C,\, g(z)=\exp\left(h(\Log z)\right),
	\end{equation}
	where $\Log$ denotes the principal branch of the complex logarithm. Then $g$ is holomorphic and with $W:=\exp(T)$ we obtain 
	\begin{equation}
	\begin{aligned}
	\overline{W}\subset \{z\in\C : \abs{z}>1/e , \abs{\arg z}<\pi/3\}.
	\end{aligned}
	\end{equation} 
	Now we parametrize $\partial W$ by its radius for large $R>1$, i.e.\ we define
	\begin{equation}
	\gamma \colon \R \to \partial W, \quad \gamma(t)=
	\begin{dcases}
	\abs{t}e^{i\varphi(t)}, & \text{if } \abs{t}\geq R, \\
	\tilde{\gamma}(t), & \text {else},
	\end{dcases}
	\end{equation}
	where $\varphi$ is a function and  $\tilde{\gamma}$ is a suitable piecewise $C^1$ curve `gluing' the outer parts together. Because $\Re h'(z) >0$ we have $\abs{\arg h'(z)}<\pi/2$ and the upper respectively the lower boundary of $T$ has a parametrization $t\mapsto \Log \abs{t}+i\varphi(t)$. This guarantees the parametrization for $\partial W$ from above. 
	
	We want to show the following lemma which is similar to \cite[Prop. 7.1]{RRRS11}. Here $V:=\C\setminus W$.
	\begin{lem} \label{Konstruktion f}
		There exists an entire function $f\in \mathcal{B}$ of finite order and a constant $K>0$ such that
		\begin{enumerate}
			\item $A:=\{z\in\C : \abs{f(z)}>K\} \subset W.$ 
			\item We have $\abs{f(z)-e^{g(z)}} = O(1)$ on $A$ and $\abs{f(z)}=O(1)$ on $V.$
		\end{enumerate}
	\end{lem}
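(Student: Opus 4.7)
The plan is to apply the Cauchy integral method as in \cite[Thm.\ 1.7]{RG14} and \cite[Prop.\ 7.1]{RRRS11} to the function $e^g$. For $z \in \C \setminus \partial W$ set
\begin{equation}
\tilde f(z) := \frac{1}{2\pi i}\int_{\partial W} \frac{e^{g(\zeta)}}{\zeta - z}\, d\zeta,
\end{equation}
with $\partial W$ oriented via $\gamma$ so that $W$ lies on the left, and then define $f(z) := \tilde f(z) + e^{g(z)}$ for $z\in W$ and $f(z) := \tilde f(z)$ for $z \in V$. The first task is absolute, locally uniform convergence of the integral. On $\partial W$ we have $h(\Log \zeta) \in \partial S_{0,\pi}$, and for $|\zeta|$ sufficiently large $\Im h(\Log \zeta) = \pm\pi$, so $g(\zeta)=-\exp(\Re h(\Log \zeta))$. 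Using the identity $\Re h(u+iv)=5\pi u - 2\pi \cos u \sinh v$ together with the bound $|\Im\Log\zeta|\le 3/4$ on $T$ from Remark \ref{tract_constants}, we obtain $\Re h(\Log\zeta)\ge 5\pi\log|\zeta|-C$, so $|e^{g(\zeta)}|\le \exp(-c|\zeta|^{5\pi})$ on $\partial W$ for large $|\zeta|$.

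Given this decay, the Sokhotski--Plemelj jump formula implies that the boundary values of $\tilde f$ from the $V$-side and the $W$-side of $\partial W$ differ by exactly $e^{g(\zeta_0)}$ at each $\zeta_0 \in \partial W$, with the sign matching our orientation; hence the piecewise definition of $f$ glues continuously across $\partial W$, and Morera's theorem makes $f$ entire. Conclusions (1) and (2) both reduce to the single claim $|\tilde f(z)|\leq K$ uniformly on $\C$. Away from $\partial W$ the bound
\begin{equation}
|\tilde f(z)|\leq \frac{1}{2\pi\,\dist(z,\partial W)}\int_{\partial W}|e^{g(\zeta)}|\,|d\zeta|
\end{equation}
is immediate, and the right-hand integral is finite by the decay above. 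Near $\partial W$ the one-sided Plemelj limits combine a principal-value Cauchy integral (bounded by the same estimate) with $\pm\tfrac12 e^{g(\zeta_0)}$, both uniformly bounded on $\partial W$; the maximum principle applied to $\tilde f$ on $V$ and on bounded truncations of $W$ then yields the uniform global bound.

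Finally, finite order and class $\B$ come from the explicit growth of $g$: on the sector containing $W$ we have $|\arg z|<\pi/3$, whence $|g(z)|=\exp(\Re h(\Log z))\leq C|z|^{5\pi}$, giving $|f(z)|\leq K+\exp(C|z|^{5\pi})$ and $\rho(f)\leq 5\pi$. Since $|f|\leq K$ on $V$, all preimages of $\{|w|>K\}$ lie in $W$, producing a single logarithmic tract over $\infty$; boundedness of $\sing(f^{-1})$ then follows as in \cite[Prop.\ 7.1]{RRRS11}. The main obstacle is the uniform bound on $\tilde f$ near $\partial W$: the naive distance estimate degenerates there, and one genuinely needs the Plemelj formula combined with the super-exponential decay of $|e^{g(\zeta)}|$ on $\partial W$ to control the boundary values.
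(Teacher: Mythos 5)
Your overall strategy is the paper's: define $\tilde f$ by the Cauchy integral of $e^{g}$ over $\partial W$, glue $\tilde f$ and $\tilde f+e^{g}$ across the boundary, and read off both conclusions from a uniform bound on $\tilde f$. The decay estimate $\abs{e^{g(\zeta)}}\leq\exp(-c\abs{\zeta}^{5\pi})$ on $\partial W$ and the order bound $\rho(f)\leq 5\pi$ are correct and match the paper. However, the step you yourself identify as the main obstacle --- the uniform bound on $\tilde f$ near $\partial W$ --- is not actually carried out. You assert that the principal-value Cauchy integral in the Plemelj limit is ``bounded by the same estimate'' as the off-curve bound $\frac{1}{2\pi\dist(z,\partial W)}\int_{\partial W}\abs{e^{g}}\,\abs{d\zeta}$; that estimate degenerates as $\dist(z,\partial W)\to 0$ and says nothing about the PV integral. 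Bounding the PV integral uniformly requires a genuine cancellation argument (H\"older/Lipschitz control of $t\mapsto e^{g(\gamma(t))}$ along the curve, uniformly in the basepoint, plus control of the curve's geometry), and this is precisely the content that is missing. The subsequent maximum-principle patch on the unbounded collar around $\partial W$ also needs a Phragm\'en--Lindel\"of-type justification that you do not supply. The paper avoids all of this: for $z$ within distance $\kappa$ of $\gamma$ it deforms the contour to $\gamma_z$, detouring around $z$ on a circle of radius $\kappa$, so that $\abs{\zeta-z}\geq\kappa$ on the whole deformed contour; the detour's contribution is then bounded by $2\pi e^{3}$ using $\Re g\leq 3$ on the $\kappa$-neighbourhood of $\gamma$. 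If you keep the Plemelj/Morera route for the \emph{gluing} (which is a legitimate alternative to the paper's truncated-contour Cauchy-theorem argument), you should still switch to contour deformation (or prove the PV bound honestly) for the \emph{boundedness}.

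A second, smaller gap: membership in $\mathcal{B}$ does not follow merely from $f$ having a single logarithmic tract, and ``follows as in \cite[Prop.\ 7.1]{RRRS11}'' is not enough for this specific $g$. The paper proves it by bounding $\tilde f'$ (again via the deformed contours), showing that at any critical point $\xi\in W$ one has $\abs{g'(\xi)}\exp(\Re g(\xi))\leq C_3/\kappa$, combining this with the lower bound $\abs{g'(z)}\geq\frac{5\pi}{2}e^{-3\pi}\abs{z}^{5\pi-1}\geq 1$ for large $z\in W$ to conclude that critical values are bounded, and invoking Denjoy--Carleman--Ahlfors (Lemma \ref{DCA thm}) to handle asymptotic values. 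The lower bound on $\abs{g'}$ is particular to this $g$ and must be checked; your write-up omits it.
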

	We split the proof up into several parts. \bigskip \\
	\textbf{Claim 1.} The map
	\begin{equation}
	\tilde{f}(z):= \dfrac{1}{2\pi i}\int_{\gamma} \dfrac{\exp g(\zeta)}{\zeta -z} d\zeta
	\end{equation}
	defines a holomorphic function $\tilde{f}\colon \C\setminus \gamma \to \C$. 
	\begin{proof}
		We show that the integral
		\begin{equation}
		\dfrac{1}{2\pi i}\int_{\gamma} \exp g(\zeta) d\zeta
		\end{equation}
		converges absolutely for all $z\in\C\setminus\gamma$. For $\abs{t}\geq R$ we have
		\begin{equation}
		h(\Log(\gamma(t)))= 5\pi(\Log \abs{t} + i\varphi(t)) + 2\pi i \sin(\Log \abs{t} + i\varphi(t)).
		\end{equation}
		Since
		\begin{equation}
		\sin(\Log \abs{t} + i\varphi(t)) = \sin(\Log \abs{t})\cdot \cosh \varphi(t) + i \cos(\Log \abs{t})\cdot \sinh \varphi(t),
		\end{equation}
		we obtain
		\begin{equation} \label{eq:g_gamma}
		\begin{aligned}
		g(\gamma(t))  &=\exp\left(h(\Log(\gamma(t)))\right) \\
		&=\exp\left(5\pi \Log \abs{t}\right)\cdot \exp\left(5\pi i\varphi(t)\right)\cdot \exp\left(2\pi i\sin(\Log \abs{t} + i\varphi(t))\right) \\
		&=\abs{t}^{5\pi} \cdot\exp(-2\pi \cos(\Log \abs{t})\cdot \sinh \varphi(t)) \\ 
		&\hspace{0.5cm}\cdot \exp\left(\pi i\left(5 \varphi(t)+ 2\sin(\Log \abs{t})\cdot \cosh \varphi(t)\right)\right).				
		\end{aligned}
		\end{equation}
		By our construction of $h$ we know that for all $\abs{t}\geq R$
		\begin{equation}
		h(\Log \gamma(t)) \in \R^+ \pm i\pi
		\end{equation}
		and thus $g(\gamma(t)) \in (-\infty,0)$ for $\abs{t}\geq R$. Therefore we obtain
		\begin{equation}
		5\varphi(t)+2\sin(\Log \abs{t})\cdot \cosh \varphi(t) =
		\begin{dcases}
		1, & \text{if } t\geq R, \\
		-1, & \text{if } t \leq -R.\\
		\end{dcases}
		\end{equation}
		Differentiating both sides with respect to $t$ we obtain
		\begin{equation}
		5\varphi'(t) + 2\dfrac{\cos(\Log \abs{t})}{t}\cdot \cosh \varphi(t) + 2\sin(\Log \abs{t})\cdot \sinh \varphi(t) \cdot \varphi'(t) = 0.
		\end{equation}
		From the construction of $h$ it is clear that $\abs{\varphi(t)}\leq \pi/3$ for all $t$. Since
		\begin{equation}
		5 + 2\sin(\Log \abs{t})\cdot \sinh \varphi(t) \geq  5-2\sinh \frac{\pi}{3} > \frac{5}{2},
		\end{equation}
		this yields
		\begin{equation}
		\abs{\varphi'(t)} = \frac{1}{\abs{t}} \cdot \dfrac{2\abs{\cos(\Log \abs{t})}\cdot \cosh \varphi(t)}{\abs{5+2\sin(\Log \abs{t})\cdot \sinh \varphi(t)}} \leq \dfrac{4\cosh \frac{\pi}{3}}{5\abs{t}} \leq \dfrac{2}{\abs{t}}.
		\end{equation}
		With
		\begin{equation}
		\gamma'(t) = \operatorname{sgn}(t)\cdot e^{i\varphi(t)}\cdot (1+it\varphi'(t))
		\end{equation}
		we obtain
		\begin{equation} \label{eq:gamma'}
		\abs{\gamma'(t)} = \abs{1+it\varphi'(t)} \leq 3 
		\end{equation}
		for $\abs{t}\geq R$. Since $\gamma$ is piecewise smooth, there is a constant $C_1>0$ such that 
		\begin{equation} \label{eq:C_1}
		\int_{-R}^{R} \exp(\Re g(\gamma(t)))\cdot \abs{\gamma'(t)}dt \leq C_1.
		\end{equation}
		With $C_2:=\exp\left(-2\pi \sinh \frac{\pi}{3}\right)$ we have
		\begin{equation}
		\begin{aligned} \label{eq:C_2}
		\exp\Re g(\gamma(t)) &= \exp\left(-\abs{t}^{5\pi}\cdot \exp\left(-2\pi \cos(\Log \abs{t})\cdot \sinh \varphi(t)\right)\right) \\
		&\leq \exp\left(-\abs{t}\cdot \exp\left(-2\pi \sinh \frac{\pi}{3}\right)\right) \\
		&= \exp\left(-C_2\cdot\abs{t}\right).
		\end{aligned}
		\end{equation}
		It follows from \protect \eqref{eq:gamma'}, \eqref{eq:C_1} and \eqref{eq:C_2} for $z\in \C\setminus\gamma$ that
		\begin{equation}  \label{eq:f_schlange1}
		\begin{aligned} 
		\abs{\dfrac{1}{2\pi i}\int_{\gamma} \exp g(\zeta) d\zeta}
		\leq \dfrac{1}{2\pi} \int_{-\infty}^{\infty} \exp{\Re g(\gamma(t))}\cdot \abs{\gamma'(t)} dt 
		\leq \dfrac{1}{2\pi}  \left(C_1+\dfrac{6}{C_2}\right). 
		\end{aligned}
		\end{equation}
		Thus $\int_{\gamma} \exp(g(\zeta)) d\zeta$ converges absolutely and the claim follows.
	\end{proof}
	\noindent
	\textbf{Claim 2.} The function $\tilde{f}$ is bounded.
	\begin{proof}
		We choose $\kappa>0$ small such that the following holds:
		\begin{enumerate}   
			\item We have 
			\begin{equation}
			U_\kappa(\gamma) := \{w\in\C : \dist(\gamma,w)\leq \kappa\} \subset \{z\in\C : \abs{z}>1/e , \abs{\arg z}<\pi/3\}.
			\end{equation}
			\item For all $z\in U_\kappa(\gamma)$ we have 
			\begin{equation}
			\begin{aligned}
			g(z)\in \left\{3+re^{i\theta} : \theta \in (3\pi /4,5\pi/4)\right\}. \label{eq:kappa_2}
			\end{aligned}
			\end{equation}
		\end{enumerate}
		Moreover, we define
		\begin{equation}
		V_{\kappa}:=\{z\in \C\setminus \overline{W} : \operatorname{dist}(z,W)>\kappa\} 
		\end{equation}
		and
		\begin{equation}
		W_\kappa := \{z \in W : \dist(z,\partial W)> \kappa\}. 
		\end{equation}
		The first step is to show that $\tilde{f}$ is bounded in $V_{\kappa}\cup W_\kappa$. As long as $z\in V_\kappa \cup W_\kappa$ we use the curve $\gamma$ from above.
		If $z\in U_\kappa(\gamma)$, we modify the curve as follows:
		
		Since $\gamma$ is unbounded, there exist $R_1 \in \R$ minimal and $R_2\in \R$ maximal such that $\dist(\gamma(R_j),z)=\kappa$ for $j\in\{1,2\}$. Moreover, there exist angles $\psi_1,\psi_2\in [0,2\pi)$ such that $\gamma(R_1)=R_1e^{i\varphi(R_1)}=z+\kappa e^{i\psi_1}$ and $\gamma(R_2)=R_2e^{i\varphi(R_2)}=z+\kappa e^{i\psi_2}$. Thus we define
		\begin{equation}
		\begin{aligned} \label{def_gamma_z}
		\gamma_z(t):=
		\begin{dcases}
		\gamma(t), & \text{if } t\in (-\infty,R_1)\cup(R_2,\infty),\\
		z+\kappa \exp\left(i\varphi_z(t)\right), & \text{if } t \in [R_1,R_2], \\
		\end{dcases}
		\end{aligned}
		\end{equation}
		with a suitable function $\varphi_z$ satisfying $\varphi_z(R_j)=\psi_j$ for $j\in\{1,2\}$.
		
		The next step is to show that the integral
		\begin{equation} \label{integral_gamma_z}
		\begin{aligned}
		\dfrac{1}{2\pi i}\int_{\gamma_z} \dfrac{\exp g(\zeta)}{\zeta -z} d\zeta 
		\end{aligned}
		\end{equation}
		is bounded independently of $z$. Notice that by Cauchy's integral theorem we have 
		\begin{equation}
		\begin{aligned}
		f(z)= \dfrac{1}{2\pi i}\int_{\gamma} \dfrac{\exp g(\zeta)}{\zeta -z} d\zeta = \dfrac{1}{2\pi i}\int_{\gamma_z} \dfrac{\exp g(\zeta)}{\zeta -z} d\zeta.
		\end{aligned}
		\end{equation}
		Now we split $\gamma_z$ up into three parts. Therefore denote 
		\begin{equation}
		\begin{aligned}
		\gamma_{z,1}:=\gamma_z|_{(-\infty,R_1)}, \quad \gamma_{z,2}:=\gamma_z|_{[R_1,R_2]} \quad \text{and} \quad \gamma_{z,3}:=\gamma_z|_{(R_2,\infty)}.
		\end{aligned}
		\end{equation}
		Using this decomposition of $\gamma_z$, we obtain with \eqref{eq:f_schlange1}
		\begin{equation}
		\begin{aligned} \label{gamma_z1}
		\abs{\dfrac{1}{2\pi i}\int_{\gamma_{z}-\gamma_{z,2}} \dfrac{\exp g(\zeta)}{\zeta -z} d\zeta} 
		&\leq \dfrac{1}{2\pi} \int_{\gamma_{z}-\gamma_{z,2}} \dfrac{\abs{\exp{ g(\zeta)}}}{\abs{\zeta-z}}\abs{d\zeta} \\
		&\leq \dfrac{1}{2\pi \kappa} \int_{\gamma} \exp{\Re g(\zeta)}\abs{d\zeta} \\
		&\leq \dfrac{1}{2\pi \kappa}  \left(C_1+\dfrac{6}{C_2}\right). 
		\end{aligned}
		\end{equation}
		Moreover, we have with \protect \eqref{eq:kappa_2} and $\gamma_{z,2}\subset U_\kappa(\gamma)$
		\begin{equation}
		\begin{aligned} \label{gamma_z2}
		\abs{\dfrac{1}{2\pi i}\int_{\gamma_{z,2}} \dfrac{\exp g(\zeta)}{\zeta -z} d\zeta} &\leq \operatorname{length}(\gamma_{z,2}) \cdot \max\limits_{\zeta \in \gamma_{z,2}} \dfrac{\abs{\exp g(\zeta)}}{\abs{\zeta-z}} \\
		&\leq \dfrac{2\pi \kappa}{\kappa} \cdot \max\limits_{\zeta \in \gamma_{z,2}} \exp\left(\Re g(\zeta)\right) \\
		&\leq 2\pi e^3.
		\end{aligned}
		\end{equation}
		Thus the integral in \eqref{integral_gamma_z} is bounded independently of $z$ and hence there exists a constant $C_3>0$ such that
		\begin{equation} \label{eq:C_3}
		\begin{aligned}
		\abs{\tilde{f}(z)}\leq C_3
		\end{aligned}
		\end{equation}
		 for all $z\in\C\setminus\gamma$. \qedhere
		
	\end{proof} 
	\noindent
	\textbf{Claim 3.} The function
	\begin{equation}
	\begin{aligned}
	f(z):=
	\begin{dcases}
	\tilde{f}(z), & \text{if } z\in V,\\
	\tilde{f}(z)+\exp(g(z)), & \text{if } z \in W, \\
	\end{dcases}
	\end{aligned}
	\end{equation}
	extends to an entire function $f\colon \C \to \C$ of finite order.
	\begin{proof}
		Let $R_0 > 0$ be large and define
		\begin{equation}
		\begin{aligned}
		\gamma_1:= \left(\gamma \cap \{z\in\C : \abs{z}>R_0\}\right) \cup \left\{R_0e^{i\theta} : \theta\in [\varphi(-R_0),\varphi(R_0)]\right\}
		\end{aligned}
		\end{equation}
		and let $V'$ be the component of $\C\setminus \gamma_1$ containing $V$.
		
		Then the function
		\begin{equation}
		\begin{aligned}
		f_1 \colon V' \to \C, f_1(z):= \dfrac{1}{2\pi i}\int_{\gamma_1} \dfrac{\exp g(\zeta)}{\zeta -z} d\zeta
		\end{aligned}
		\end{equation}
		defines a holomorphic function on $V'$. By the Cauchy integral theorem we obtain for $z\in V$
		\begin{equation}
		\begin{aligned}
		f(z)-f_1(z) = \dfrac{1}{2\pi i}\int_{\gamma} \dfrac{\exp g(\zeta)}{\zeta -z} d\zeta - \dfrac{1}{2\pi i}\int_{\gamma_1} \dfrac{\exp g(\zeta)}{\zeta -z} d\zeta = 0.
		\end{aligned}
		\end{equation}
		Thus $f_1$ coincides with $f$ on $V$. Moreover, we have for $z\in V'\cap W$ by the Cauchy integral theorem that
		\begin{equation}
		\begin{aligned}
		f_1(z)-\tilde{f}(z) = \exp(g(z)).
		\end{aligned}
		\end{equation}
		Notice that $f_1 =f|_{V'}$. Since $R_0$ was arbitrary, the function $f$ is entire. It is left to show that $f$ is of finite order. With equality \eqref{eq:g_gamma} we obtain for $z=re^{i\theta}\in W$ 
		\begin{equation}
		\begin{aligned}
		\abs{g(re^{i\theta})} = r^{5\pi}\cdot \exp(-2\pi\cos\left(\Log r\right)\cdot \sinh \theta) 
		\leq r^{5\pi} \cdot \exp\left(2\pi\sinh \dfrac{\pi}{3}\right).
		\end{aligned}
		\end{equation}
		Since $f(z)=\exp(g(z))+O(1)$ for $z\in W$ and $f(z)=O(1)$ for $z\in\C\setminus W$, the function $f$ is of order at most $5\pi$ and thus the claim follows.
	\end{proof}
	\begin{proof}[Proof of Lemma \ref{Konstruktion f}.]
		It is left to show that $f\in\B$. By the theorem of differentiation under the integral sign, we have 
		\begin{equation}
		\begin{aligned}
		\tilde{f}'(z)= \dfrac{1}{2\pi i}\int_{\gamma} \dfrac{\exp g(\zeta)}{(\zeta -z)^2} d\zeta.
		\end{aligned}
		\end{equation}
		Because the only difference between $\tilde{f}$ and $\tilde{f}'$ is that $1/(\zeta-z)$ was replaced by $1/(\zeta-z)^2$ in the integral, this yields with \eqref{eq:f_schlange1} that
		\begin{equation}
		\begin{aligned}
		\abs{\tilde{f}'(z)} \leq \dfrac{1}{2\pi\kappa^2}\left(C_1+\dfrac{6}{C_2}\right)
		\end{aligned}
		\end{equation}
		for $z\in V_\kappa \cup W_\kappa$. For $z\in U_\kappa(\gamma)$ and $\gamma_z$ as in \eqref{def_gamma_z} we also have the same estimates in \eqref{gamma_z1} and \eqref{gamma_z2} with the extra factor $1/\kappa$ . This yields that we have in total $\abs{\tilde{f}'(z)}\leq C_3/\kappa$ on $\C\setminus\gamma$. 
		
		Now we show that $f\in\mathcal{B}$, i.e.\ that $\sing(f^{-1})$ is bounded. Since
		\begin{equation}
		\begin{aligned}
		f'(z)=
		\begin{dcases}
		\tilde{f}'(z), & \text{if } z\in V,\\
		\tilde{f}'(z)+g'(z)\exp(g(z)), & \text{if } z \in W, \\
		\end{dcases}
		\end{aligned}
		\end{equation}
		we obtain $\abs{f'(z)-g'(z)\exp(g(z))}\leq C_3/\kappa$ on $W$. Since $f$ is bounded on $V$, it suffices to show that the set of critical values of $f$ corresponding to critical points in $W$ is bounded. That is, the set $\{f(\zeta) : \zeta \in W, f'(\zeta)=0\}$ is bounded. Let $\xi\in W$ be a critical point of $f$. Thus 
		\begin{equation} \label{eq:f_in_B}
		\begin{aligned}
		\abs{f'(\xi)-g'(\xi)\exp(g(\xi))}=\abs{g'(\xi)}\cdot \exp(\Re(g(\xi)))\leq \dfrac{C_3}{\kappa}.
		\end{aligned}
		\end{equation}
		Moreover, we have
		\begin{equation}
		\begin{aligned}
		g'(z)&=\dfrac{\pi}{z}\left(5+2i\cos\left(\Log z\right)\right)\cdot \exp\left(5\pi\Log z + 2\pi i\sin\left(\Log z\right)  \right) \\
		&= \left(5\pi+2\pi i\cos\left(\Log z\right)\right)\cdot z^{5\pi -1} \cdot \exp\left(2\pi i \sin\left(\Log z\right) \right).
		\end{aligned}
		\end{equation}
		In addition, we have
		\begin{equation}
		\begin{aligned}
		\abs{5\pi+2\pi i\cos\left(\Log (x+iy)\right)}\geq \Re\left(5\pi+2\pi i\cos\left(\Log (x+iy)\right)\right)
		\geq \dfrac{5\pi}{2}
		\end{aligned}
		\end{equation} 
		and
		\begin{equation}
		\begin{aligned}
		\abs{\exp\left(2\pi i \sin\left(\Log (x+iy)\right)\right)}=\exp\left(-2\pi \Im\left(\sin\left(\Log (x+iy)\right)\right)\right) 
		\geq \exp\left(-3\pi\right).
		\end{aligned}
		\end{equation}
		Together this yields for large $z\in W$ that
		\begin{equation}
		\begin{aligned}
		\abs{g'(z)}\geq \dfrac{5\pi}{2} e^{-3\pi}\cdot \abs{z}^{5\pi-1} \geq 1.
		\end{aligned}
		\end{equation}
		With \eqref{eq:C_3} and \eqref{eq:f_in_B} we obtain
		\begin{equation}
		\begin{aligned}
		\abs{f(\xi)} \leq \abs{\tilde{f}(\xi)}+\abs{\exp(g(\xi))} \leq C_3 + \exp(\Re g(\xi)) \leq C_3+\dfrac{C_3}{\kappa \abs{g'(\xi)}} \leq C_3+\dfrac{C_3}{\kappa}
		\end{aligned}
		\end{equation}
		Since $f$ is of finite order, the number of asymptotic values is finite by Lemma \ref{DCA thm}. Thus $\sing(f^{-1})$ is bounded, i.e.\ $f\in\mathcal{B}$.
	\end{proof}
	\section{Proof of Theorem \ref{THM2}}
	\begin{proof}[Proof of Theorem \ref{THM2}]
		From Lemma \ref{Konstruktion f} we have obtained a suitable function $f$ which is close to $e^g$ in the subset $A$ of our desired tract. The function we will construct in the following will be of disjoint type. 
		
		First choose $R>0$ large enough such that $\sing(f^{-1}) \subset D(0,e^R)$.
		Following Remark \ref{f_0=lambda f}, we choose $\lambda$ small enough to obtain all features of $f_0$ mentioned there with $D=D(0,e^R)$.
		
		By Theorem \ref{RRRS} we know that $\J(f_0)$ consists of an uncountable union of pairwise disjoint hairs. We now want to show that every hair of $f_0$ is nowhere differentiable including all endpoints. 
		
		For $\tilde{A}:=f_0^{-1}\left(\C\setminus \overline{\D}\right)$ denote $T:=\Log\big(\tilde{A}\big)$ and define $\mathcal{T}$ as in Remark \ref{Tracts_log_coord}. 
		Thus the logarithmic transform of $f_0$ takes the form $F\colon \mathcal{T}\to \mathbb{H}_{> 0}:=\{z\in \C : \Re z >0\}$. By Remark \ref{Baranski2} we know that $\J(F)$ also consists of an uncountable union of pairwise disjoint hairs. From now on we fix an address $\ul{s}$ such that the set $\{z\in J(F) : z \text{ has address }  \ul{s}\}$ is non-empty and thus a hair by Theorem \ref{Hairs}. We consider an arbitrary parametrization $\gamma\colon [0,\infty)\to\C$ of this hair. Then $F^n(\gamma)$ is the hair in $J(F)$ having the address $s_{n}s_{n+1}\dots$  with endpoint $F^n(\gamma(0))$. By Remark \ref{Re F} we have $\Re F^n|_{\gamma\left((0,\infty)\right)} \to \infty$ locally uniformly as $n\to\infty$.
		It follows from the construction of the tract $T\supset T(0)$ and the fact that $F^n(\gamma)$ has address $s_{n}s_{n+1}\dots$ for all $n\in\N$ that $t \mapsto\Im (F^n(\gamma(t))-2\pi i s_n)$ changes its sign infinitely often. Remark \ref{tract_constants} yields that their exists for all $n\in\N_0$ a sequence $\left(z_{n,k}\right)_{k\in\N}\in \left(F^n(\gamma((0,\infty))\right)^\N$ such that the following holds:
		\begin{enumerate}
			\item We have for all $n\in\N_0$
			\begin{equation}
			\begin{aligned}
			\Im \left(z_{n,k}-2\pi is_n\right)
			\begin{dcases}
			>1/5, & \text{if } k \text{ is odd},\\
			<-1/5, & \text{if } k \text{ is even}. \\
			\end{dcases}
			\end{aligned}
			\end{equation}
			\item For all $n\in\N_0$ and $k\in \N$ we have $\Re z_{n,k+1}-\Re z_{n,k} =\pi$ and thus $\lim\limits_{k\to\infty} \Re z_{n,k}=\infty$.
		\end{enumerate}
		Let $w_0\in \gamma([0,\infty))$ and denote $w_n:=F^n(w_0)$ for all $n\in\N$. In this case we will take $z_{n,k}:=x_{n,k}+iy_{n,k}$, where $x_{n,k}$ can be chosen to be $\pi/2 + \pi\left(\lceil \Re F^n(\gamma(0))/\pi  \rceil+ k\right)$ according to Remark \ref{tract_constants}. Thus the imaginary part of $z_{n,k}-2\pi is_n$ is bounded uniformly away from $0$.
		
		Since $F$ is the logarithmic transform of the function $f_0$ of disjoint type, we have $\overline{\mathcal{T}}\subset \mathbb{H}_{\geq 0}$ and  there exists by Remark \ref{f_0=lambda f} a constant $K>1$ such that we have $\abs{F'(z)}\geq K$ for all $z\in \mathcal{T}$. Thus every branch $F^{-1}_k \colon \mathbb{H}_{>0} \to T(s_k)$ of $F^{-1}$ is a uniform contraction on $\mathbb{H}_{>0}$.
		
		Now we proceed as follows. If $\Im (w_n-2\pi is_n) >0$, there exists a minimal $k_n\in 2\N$ such that 
		\begin{equation}
		\begin{aligned}
		\Re w_n < x_{n,k_n} \quad \text{and} \quad y_{n,k_n}-2\pi s_n<0
		\end{aligned}
		\end{equation}
		and consider the triangle $\Delta(w_n,z_{n,k_n},z_{n,k_n+1})$ (see Figure \protect \ref{Dreieck_hair}). Since for all $n\in\N$
		\begin{equation}
		\begin{aligned}
		\dfrac{\pi}{4} < x_{n,k_n} - \Re w_n < 2\pi
		\end{aligned}
		\end{equation}
		and
		\begin{equation}
		\begin{aligned}
		\dfrac{1}{5} < \Im w_n- y_{n,k_n}< \dfrac{3}{2}
		\end{aligned}
		\end{equation}
		holds by Remark \ref{tract_constants}, we obtain for all $n\in\N$
		\begin{equation}
		\begin{aligned}
		\dfrac{1}{20\pi} < \abs{\dfrac{\Im w_n - y_{n,k_n}}{\Re w_n - x_{n,k_n}}} < \dfrac{6}{\pi}.
		\end{aligned}
		\end{equation}
		Using the fact that $x_{n,k_n+1}- x_{n,k_n}=\pi$, we obtain 
		\begin{equation}
		\begin{aligned}
		\dfrac{2}{5\pi} < \abs{\dfrac{ y_{n,k_n}- y_{n,k_n+1} }{x_{n,k_n}- x_{n,k_n+1}}} < \dfrac{3}{2\pi}.
		\end{aligned}
		\end{equation}
			\begin{figure}[H]
				\begin{center}
					\includegraphics[height=8.5cm]{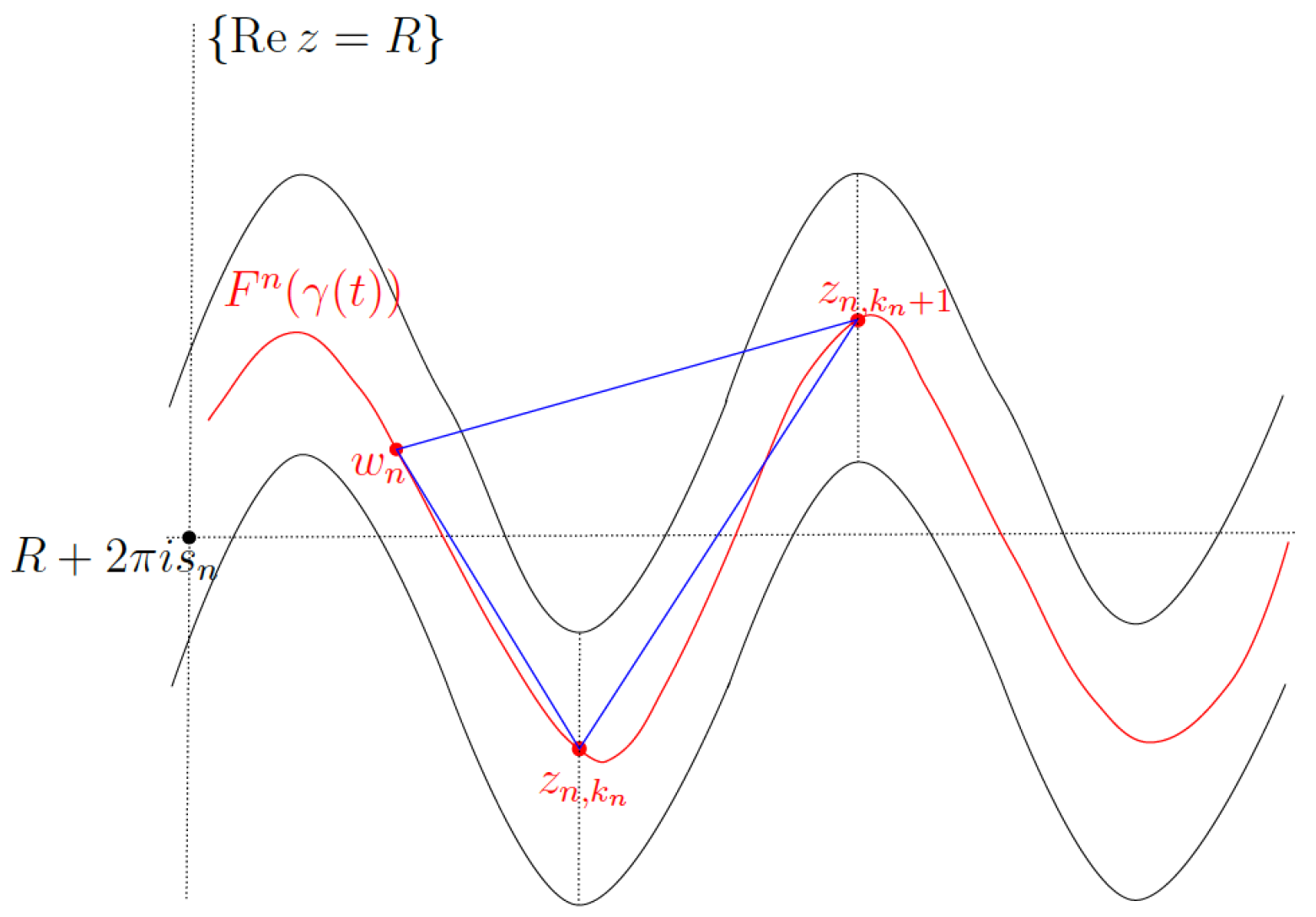}
					\caption{Construction of the function $g$ using the logarithmic change of variable of the map $h$.}
				\end{center}
			\end{figure}
		Hence 
		\begin{equation}
		\begin{aligned}
		\dfrac{1}{20\pi}<\arctan\left(\abs{\dfrac{\Im w_n - y_{n,k_n}}{\Re w_n - x_{n,k_n}}}\right) + \arctan\left(\abs{\dfrac{ y_{n,k_n}- y_{n,k_n+1} }{x_{n,k_n}- x_{n,k_n+1}}}\right)< \dfrac{\pi}{2}
		\end{aligned}
		\end{equation}
		and thus the angle at $z_{n,k_n}$ is always greater than $\pi/2$ and less than $19\pi/20$. This yields that $\Delta(w_n,z_{n,k_n},z_{n,k_n+1})$ is non-degenerate, say
		\begin{equation}
		\begin{aligned}
		\mathcal{D}\left(\Delta(w_n,z_{n,k_n},z_{n,k_n+1})\right) < 1-\delta \label{eq:Dreieck_Endbeweis}
		\end{aligned}
		\end{equation}
		for a suitable $\delta >0$.
		
		Analogously, if $\Im (w_n-2\pi is_n) <0$, there exists $k_n\in 2\N+1$ with the same property. Since $4\pi > \sqrt{(3\pi)^2+(3/4)^2}$ and by our construction of the tract we know that the disc $D_n:=D(w_n,4\pi)$ contains the triangle $\Delta(w_n,z_{n,k_n},z_{n,k_n+1})$ for all $n\in\N$. If we choose $R>4\pi$ we can guarantee that $\Re w_n-4\pi>0$ for all $n\in \N_0$ since $\Re w_n > R$. Now we want to show that with
		\begin{equation}
		\begin{aligned}
		\phi_n := F^{-1}_0 \circ \dots \circ F^{-1}_{n-1}
		\end{aligned}
		\end{equation} 
		the map $\phi_n|_{D_n}$	has a uniformly bounded distortion. If we put $R_n:=\dist(w_n,\partial \mathbb{H}_{> 0})$ for all $n\in\N_0$, we obtain $R_n=\Re w_n > R$ for all $n\in \N_0$. Since $\phi_n$ is conformal on $D(w_n,R_n)$, we obtain by Remark \ref{Koebe_Verzerrung}
		\begin{equation}
		\begin{aligned}
		\mathcal{L}\left(\phi_n|_{D_n}\right) \leq \left(\dfrac{R_n+4\pi}{R_n-4\pi}\right)^4 \leq  \left(\dfrac{R+4\pi}{R-4\pi}\right)^4 \leq 1+\delta
		\end{aligned}
		\end{equation}
		for $R$ large enough. To guarantee convexity of $\phi_n(D_n)$, we need by Lemma \ref{Radius of convexity} to take $R$ large enough such that $2\pi /R < \sqrt{2}-1$. Together with Lemma \ref{Kleine_Verzerrung} and \protect \eqref{eq:Dreieck_Endbeweis} this yields
		\begin{equation}
		\begin{aligned}
		\mathcal{D}\left(\Delta\left(\phi_n(w_n),\phi_n(z_{n,k_n}),\phi_n(z_{n,k_n+1})\right)\right)
		& \leq (1+\delta)\cdot \mathcal{D}\left(\Delta(w_n,z_{n,k_n},z_{n,k_n+1})\right) \\
		&< 1-\delta^2.
		\end{aligned}
		\end{equation}
		
		Since $\abs{F'(z)}\geq K$ for all $z\in \mathcal{T}$ and $\left(F^{-1}_j\right)'(z) = \left(F^{-1}_0\right)'(z)$ for all $j\in\N_0$, we obtain by the chain rule
		\begin{equation}
		\begin{aligned}
		\abs{\phi'_n(z)} = \dfrac{1}{\abs{(F^n)'\left(F^{-n}_0(z)\right) }} 
		= \dfrac{1}{\prod_{j=0}^{n-1}\abs{F'(F^{j-n}_0(z))}} 
		\leq \dfrac{1}{K^{n}}.
		\end{aligned}
		\end{equation}
		Since the distortion of $\phi_n$ on the triangle $\Delta(w_n,z_{n,k_n},z_{n,k_n+1})$ is bounded for all $n\in \N_0$, these triangles converge to the single point $w_0$ for $n\to\infty$. On the other hand, these triangles are always non-degenerate.
		Thus $\gamma$ is not differentiable in $w_0$ by Lemma \ref{gamma_Dreieck}. Since $w_0$ was arbitrary, the claim follows.
	\end{proof}

\end{document}